\documentclass[10pt, oneside]{article}

\input math_headers.sty

\renewcommand{\d}{\mathrm{d}}
\newcommand{\sect}{\mathrm{sect}}
\newcommand{\cl}{\mathrm{cl}}
\newcommand{\Max}{\mathrm{Max}}
\newcommand{\pmp}{\Phi_p^{\mathrm{Max}}}

\begin{document}

\title{Abelian Duality for Generalized Maxwell Theories}
\author{Chris Elliott \\ {\small Institut des Hautes \'Etudes Scientifiques} \\ {\small 35 Route de Chartres} \\ {\small 91440, France}}
\date{}

\maketitle

\begin{abstract}
We describe a construction of generalized Maxwell theories -- higher analogues of abelian gauge theories -- in the factorization algebra formalism of Costello and Gwilliam, allowing for analysis of the structure of local observables.  We describe the phenomenon of abelian duality for local observables in these theories as a form of Fourier duality, relating observables in theories with dual abelian gauge groups and inverted coupling constants in a way compatible with the local structure.  We give a description of expectation values in this theory and prove that duality preserves expectation values.  Duality is shown to, for instance, interchange higher analogues of Wilson and 't Hooft operators.\\ \ \\
\textbf{Mathematics Subject Classifications (2010):} 81T13, 81T70 \\
\textbf{Keywords:} Generalized Maxwell Theories, Higher Abelian Gauge Theories, Factorization Algebras, BV Quantization, Abelian Duality
\end{abstract}

\tableofcontents
\section{Introduction and Motivation}
The aim of this paper is to give a detailed account of the phenomenon of \emph{S-duality} in a very simple situation, as a duality between families of \emph{free} quantum field theories, in a way allowing explicit understanding and computation of the local structure of the duality.  This free version of S-duality was called \emph{abelian duality} by Witten in his 1995 paper on the subject \cite{WittenAbelian}, in which he addresses a relationship between the partition functions of abelian pure Yang-Mills theories on a compact 4-manifold.  This relationship, and generalizations to higher degree, were also studied by Verlinde in \cite{Verlinde}.  Analogues of this duality in lower dimensions were also known, for instance the \emph{T-duality} between sigma models with dual torus targets on a 2-manifold, and a duality between sigma models and abelian gauge theories on a 3-manifold (these are described in \cite{WittenQFS},  explicit calculations of duality for the partition functions in three dimensions have been performed by Prodanov and Sen \cite{ProdanovSen} and by Broda and Duniec \cite{BrodaDuniec}, and a detailed analysis of duality for more general observables was recently performed by Beasley \cite{Beasley1, Beasley2}).  These theories fit into an infinite family of theories whose fields model connections on higher torus bundles, models for which have been described by Freed \cite{Freed} using \emph{ordinary differential cochains} to model the fields.  The quantizations of these theories were further studied by Barb\'on \cite{Barbon}, who discussed abelian duality for the partition functions in higher degree theories, and Kelnhofer \cite{Kelnhofer}, who explained how to compute the vacuum expectation values of gauge invariant observables in these theories using the language of ordinary differential cochains.

Abelian duality for generalized Maxwell theories refers to a relationship between two different ``dual'' quantum field theories on a single spacetime manifold: one involving fields of degree $k$ and gauge group $T$ (higher gauge fields whose curvature is a $k+1$-form), and the other involving fields of degree $n-k$ and gauge group $T^\vee$.  The mathematical consequences of this relationship include an equality of the partition functions of the two theories (we do not address this here), but also the idea that there should exist a procedure that associates to a gauge invariant local observable a ``dual'' local observable in the dual theory, constructed by means of a Fourier transform on the space of fields.  For example, for ordinary abelian gauge theories in dimension 4, abelian duality should interchange Wilson and 't Hooft operators.  What's more, dual observables should have equal expectation values.  In this paper we will prove the following theorem, making this idea precise in the context of a specific mathematical model for local quantum observables. Fix a compact $n$-manifold $X$.

\begin{theorem}
Let $U \sub X$ be a fixed open subset of $U$.  To every gauge invariant local observable $\OO$ on the open set $U$ in a degree $k$ generalized Maxwell theory with gauge group $T$, there exists a (non-unique) gauge invariant \emph{dual} local observable $\wt \OO$ on $U$ in the theory of degree $n-k$ and with gauge group $T^\vee$.  If $\mr H^p(U;\ZZ) = 0$ then we can define vacuum expectation values of the two observables, and they agree:
\[\langle \OO \rangle_{T} = \langle \wt \OO \rangle_{T^\vee}.\]
\end{theorem}

Since the subset $U$ is not required to be compact, abelian duality makes sense \emph{locally}; that is, we can investigate in what sense duality is compatible with the inclusion maps from observables on an open subset $U \sub X$, and observables on a larger open set $V$ containing $U$.  So, we will describe abelian duality as a relationship between a pair of \emph{prefactorization algebras}, which model the local quantum observables in the quantum field theories.  The duality is compatible with the structure maps in the factorization algebras, but does not extend to a morphism of prefactorization algebras because of an obstruction to defining a dual observable on non-contractible open sets, where observables may not be determined purely by the curvature of a field.  Instead duality arises as a \emph{correspondence} of prefactorization algebras, by which we mean a zig-zag of morphisms of prefactorization algebras
\[\xymatrix{
&\obs_{\mr{intermediate}} \ar[dr] \ar[dl] & \\
\obs_{k,T} && \obs_{n-k, T^\vee}
}\]
relating two prefactorization algebras $\obs_{k,T}$ and $\obs_{n-k, T^\vee}$ on $X$ by means of an intermediate prefactorization algebra on the same space.

Despite the theories being free, duality of observables is still a non-trivial phenomenon to investigate.  The dual of a gauge-invariant observable can have a qualitatively different nature to the original observable.  For instance, in abelian Yang-Mills we verify that the dual of an abelian Wilson operator (a holonomy operator around a loop) is an 't Hooft operator (corresponding to imposing a singularity condition on the fields around a loop).

\begin{remark}
There are two aspects to the arguments of this paper.  First we explain how one can \emph{construct} a free quantum field theory -- a prefactorization algebra -- starting from a possibly non-linear space of fields.  We then demonstrate abelian duality for the theories so constructed starting from the fields and Lagrangian density of a generalized Maxwell theory.  These two steps are logically separate, in particular it should be possible to appreciate the duality results of Section \ref{sectionFourier} with only a cursory reading of the somewhat technical constructions in Sections \ref{sectionDerived} and \ref{sectionAction}.
\end{remark}

Now, we can outline the structure of the paper.  In Section \ref{sectionBV} we begin by describing the general formalism we use to construct the prefactorization algebra of quantum observables, starting from a sheaf of fields and an action functional.  This formalism (based on that of Batalin-Vilkovisky) was developed by Costello and Gwilliam \cite{Book2} \cite{GwilliamThesis} as a formulation of quantization techniques common in the physics literature (described for instance in Witten's expository note \cite{WittenAntibracket}) as homological algebra.  While we only use the theory for free theories here (their methods can also be used to study the perturbative parts of interacting theories), we do need to allow for spaces of fields that include discrete factors, so we go over the formalism with a certain amount of care.

Having set up the abstract theory we construct the main objects of study in Section \ref{sectionGenMaxwell}: the prefactorization algebras of observables in \emph{generalized Maxwell theories}.  These are free quantum field theories whose fields model connections on higher principal torus bundles, whose action functional generalizes the Yang-Mills action.  These theories are closely related to simpler free theories -- where the fields are just $p$-forms and the action is just the $L^2$ norm -- by mapping a connection to its curvature.  Observables of interest (such as Wilson and 't Hooft operators in abelian Yang-Mills) factor through the curvature, so in a sense ``come from'' this simpler theory.  As such we can prove results about the expectation values of observables purely in the world of curvatures.

The theory of expectation values arises naturally from the factorization algebra formalism.  In Section \ref{sectionExpVal} we describe how to abstractly define expectation values of gauge invariant observables by viewing the observables as living in a cochain complex with canonically trivialisable cohomology.  To compute expectation values we use classical physical techniques: \emph{Feynman diagrams} and \emph{regularization}.  Since the theory is free these methods are very well-behaved, and encode results about convergent sequences of finite-dimensional Gaussian integrals.

Finally, in Section \ref{sectionFourier} we introduce abelian duality for observables in our theories as a \emph{Fourier duality}.  This also admits a diagrammatic description, but we prove that duality preserves expectation values using a Plancherel's theorem at each regularized level.  It is worth remarking that there are three different ``levels'' of factorization algebra necessary to make sense of Fourier duality for observables in the generalized Maxwell theory.  The dual itself is defined for a theory where the fields consist of all $p$-forms, but at this level duality doesn't preserve expectation values.  An observable in this theory restricts to an observable in a theory where the fields consist of only \emph{closed} $p$-forms and at this level duality \emph{does} preserve expectation values.  However in order to define a dual we now need to \emph{choose} an extension from an observable acting on closed $p$-forms to an observable acting on all $p$-forms.  We can phrase this in terms of a correspondence of prefactorization algebras: observables are called \emph{incident} if they are the images of the same observable under a pair of restriction maps (to the closed $p$-form theory and its dual).  The third level is that of the generalized Maxwell theory we're really interested in.  On an open set we can construct a map from observables in the closed $p$-form theory to observables in the generalized Maxwell theory, which is an isomorphism of local sections of the prefactorization algebra if the open set is contractible, for instance.  This gives us a way of defining a dual of a local observable in the original generalized Maxwell theory.

It would be interesting to investigate extensions to twisted supersymmetric abelian (higher) gauge theories.  In particular an understanding of abelian duality for topological twists of maximally supersymmetric abelian gauge theories, which is sufficiently complete to include an understanding of boundary conditions, should -- according to the approach of Kapustin and Witten in \cite{KW} -- recover the theory of \emph{geometric class field theory}.  Specifically one expects an equivalence of the categories of branes which should recover the abelian version of the geometric Langlands correspondence, as realized independently by Laumon \cite{Laumon} and Rothstein \cite{Rothstein} by a twisted Fourier-Mukai transformation.

\subsection*{Acknowledgements}
I would like to thank Kevin Costello for many helpful ideas and discussions throughout this project, and Thel Seraphim for some of the initial ideas on how to view abelian duality for expectation values as a version of Plancherel's theorem.  I would also like to thank Saul Glasman, Sam Gunningham, Owen Gwilliam, Boris Hanin, Theo Johnson-Freyd, David Nadler, Toly Preygel, Nick Rozenblyum and Jesse Wolfson for helpful conversations, and the anonymous referees for many useful comments and corrections.  Finally, I'd like to thank Theo Johnson-Freyd, Aron Heleodoro and Philsang Yoo for carefully reading an earlier draft and offering many helpful comments and corrections.  Figures were created using the diagramming software \emph{Dia}.

\section{The BV Formalism for Free Field Theories} \label{sectionBV}
\subsection{The Idea of the BV Formalism} \label{BV_informal}
The \emph{Batalin-Vilkovisky formalism} (hereafter referred to as the BV formalism) \cite{BatalinVilkovisky} gives a description of the moduli space of solutions to the equations of motion in a classical field theory that is particularly amenable to quantization.  When we quantize following the BV recipe, we will see -- in the case of a free theory -- that the Feynman path integral description of the expectation values of observables naturally falls out.  This quantization procedure admits an extension to interacting theories: see \cite{CostelloBook} and \cite{Book2} for details.

We start with a rough description and motivation of the classical BV formalism.  In its simplest form, a classical field theory consists of a space $\Phi$ of \emph{fields} (often the global sections of a sheaf over a manifold $X$ which we call \emph{spacetime}), and a map $S \colon \Phi \to \RR$: the \emph{action functional}.  The physical states in this classical system are supposed to be those states which extremise the action, i.e. the \emph{critical locus} of $S$: the locus in $\Phi$ where $\d S=0$.  This can be written as an intersection, specifically as
\[\crit(S) = \Gamma_{\d S} \cap X\]
where $\Gamma_{\d S}$ is the graph of $\d S$ in the cotangent bundle $T^*X$, and $X$ is the zero section.  The classical BV formalism gives a model for functions on the \emph{derived} critical locus of $S$: that is, more than just forming the pullback in spaces given by this intersection, one forms a derived pullback in a homotopy category of spaces, and considers its ring of functions.

We can describe the ring of functions on the derived critical locus explicitly as a derived tensor product by resolving $\OO(\Gamma_{\d S})$ as an $\OO(T^*X)$-module.  We choose the Koszul resolution.  Explicitly this says that there is a quasi-isomorphism:
\[\OO(\Gamma_{\d S}) \iso \left(\xymatrix{\cdots \ar[r] &\OO(T^*X) \otimes_{\OO(X)} \bigwedge^2 T_X \ar[r] &\OO(T^*X) \otimes_{\OO(X)} T_X \ar[r] &\OO(T^*X)}\right)\]
where $T_X$ denote the module of vector fields, and the differential is extended from the map $\OO(T^*X) \otimes_{\OO(X)} T_X \to \OO(T^*X)$ sending $f \otimes v$ to $fv - f\iota_{\d S}(v)$ as a derivation with respect to the wedge product  Taking this complex and tensoring with $\OO(X)$ we find the complex $\PV(X)$ of \emph{polyvector fields} on $X$, i.e. exterior powers of the ring of vector fields placed in non-positive degrees, with the differential $-\iota_{\d S}$ from vector fields to functions extended to a differential on the whole complex as a derivation for the wedge product.  This model for functions on the derived critical locus is the BV model for the algebra of \emph{classical observables} in the Lagrangian field theory.

\begin{remark}
We should remark that in this paper we'll work in a very restricted context, that of free theories, and therefore we will not need to use sophisticated notions from the theory of derived algebraic geometry.
\end{remark}

Now, we motivate the \emph{quantum} BV formalism by means of a toy example.  Let $\Phi$ be a finite-dimensional vector space, and let $S$ be a quadratic form on this vector space.  In this toy example, quantum field theory (in Euclidean signature) boils down to the computation of the Gaussian integrals
\begin{align*}
\langle \OO \rangle &= \frac {\int_\Phi \OO(\phi) e^{-S(\phi)/\hbar} d\phi} {\int_\Phi e^{-S(\phi)/\hbar} d\phi} \\
&= \frac 1Z \int_\Phi \OO(\phi) e^{-S(\phi)/\hbar} d\phi
\end{align*}
for polynomial functions $\OO$ on $\Phi$ (writing $Z$ for the normalising factor $\int e^{-S(\phi)/\hbar} d\phi$).  Here $\hbar$ is a positive real number and $\d\phi$ is a volume form on $\Phi$.  Equivalently, we can think of this as computing the cohomology class of a top degree element $\OO d\phi$ in a twisted de Rham complex: the complex of polynomial differential forms $\Omega^*_{\text{poly}}(\Phi)$ with differential $\d - \frac 1 \hbar(\wedge \d S)$.  Finally, contracting with the top form $\d\phi$ gives an isomorphism of graded vector spaces $\PV(\Phi)[-\dim \Phi] \to \Omega^*(\Phi)$, which becomes an isomorphism of \emph{complexes} when one gives the space of polyvector fields the differential $D - \frac 1\hbar \iota_{\d S}$, where $D$ is the \emph{BV operator} given by transferring the exterior derivative along the map $\iota_{d\phi}$.  Concretely, let $x_1, \ldots x_n$ form a basis for $\Phi$, and let $\dd_1, \ldots, \dd_n$ be the corresponding basis on the tangent space $T_0\Phi$.  Say $\d\phi = \d x_1 \wedge \cdots \d x_n$.  Then
\[D = \sum_{i=1}^n \frac \dd{\dd x_i} \frac \dd {\dd(\dd_i)}.\]

If $\Phi$ is infinite-dimensional then we can no longer immediately make sense of the original Gaussian integral (though we can compute it as a suitable limit), nor of ``top'' degree forms in the twisted de Rham complex.  But the complex $\PV(\Phi)$ in degrees $\le 0$ and the differential $D - \frac 1\hbar\iota_{\d S}$ still makes sense, and we can still compute the cohomology class of a degree zero element, thus defining its \emph{expectation value} directly.  What's more, we see that, considering instead the isomorphic complex with differential $\hbar D - \iota_{\d S}$, in the ``classical'' limit as $\hbar \to 0$ we recover the BV description of the algebra of classical observables.  So this explicitly gives a \emph{quantization} of that algebra.  This quantization is no longer a dg-algebra: the Leibniz identity receives a correction term proportional to $\hbar$ coming from the classical Poisson bracket.

The general BV formalism therefore gives a model for the classical and quantum observables in a free Lagrangian field theory (i.e. a theory with quadratic action) following this outline.  The classical observables are constructed as an algebra of polyvector fields with the differential $\iota_{\d S}$, and a quantization is produced by deforming this differential with a BV operator analogous to the one above.  One builds this operator by -- approximately -- identifying Darboux co-ordinates on the (shifted) cotangent bundle to the fields and defining an operator using a formula like the one given above.  An easier way to describe this is to use the Poisson bracket on functions on the shifted cotangent bundle (the so-called \emph{antibracket}), and to extend this to a BV operator by an inductive formula on the degree. 

\subsection{Free Fields with Sectors} \label{sectionDerived}
When defining a classical field theory on a manifold $X$, it's not completely clear what kind of object one should use to define the ``sheaf of fields'' of a Lagrangian field theory on $X$.  One can build a classical field theory starting from a cochain complex of vector spaces (this is the approach used by Costello and Gwilliam \cite{Book2}).  Many interesting examples, such as non-linear sigma models, would be better served by a formalism where the fields could take values in a space with non-trivial topology.  We would also like to be able to include discrete data in the space of fields, for instance the choice of $G$-bundle for fields in Yang-Mills theory.  Witten's work on abelian duality \cite{WittenAbelian} shows that this discrete invariant is necessary for the existence of duality phenomena: one sees theta functions in the partition function of an abelian gauge theory only after summing over all topological sectors.

In this section we'll use the following definition of a classical Lagrangian field theory which, while not the most general definition possible, allows for discrete pieces in the fields suitable for the free theories we will consider. 

\begin{remark}
All algebraic constructions with topological vector spaces in this paper take place in the context of nuclear Frech\'et spaces (or cochain complexes thereof).  For instance, the dual space $V^\vee$ of a vector space $V$ is always the continuous dual equipped with the strong topology, and the tensor product is the completed projective tensor product.  
\end{remark}

\begin{definition} \label{Lag_theory_def}
A \emph{classical Lagrangian pre-theory with sectors} on a manifold $X$ consists of a datum $(\Phi, \LL)$, where
\begin{enumerate}
 \item The \emph{fields} $\Phi$ are a presheaf of cochain complexes of topological abelian groups $\Phi$ equipped with the structure of an extension
 \[0 \to V(U) \to \Phi(U) \to \Phi_{\sect}(U) \to 0\]
 on each open set $U \sub X$ compatible with the restriction maps, where $V(U)$ is a cochain complex concentrated in degrees $\le 0$, and $\Phi_\sect(U)$ is a discrete abelian group.
 \item The \emph{Lagrangian density} $\LL$ is a morphism of presheaves on $X$
 \[\mc L \colon \Phi \to \ul{\dens}\]
where $\dens$ denotes the sheaf of densities on $X$, viewed as a sheaf of cochain complexes.
\end{enumerate}
The pre-theory $(\Phi,\LL)$ is a classical Lagrangian \emph{theory} with sectors if $\Phi$ is a sheaf, and $V$ is the sheaf of sections of a complex of vector bundles.

We say a Lagrangian pre-theory is \emph{linear} if $\Phi_\sect(U) = \pt$ for all $U$.
\end{definition}

\begin{remark}
It's worth noting that we could also define a theory with fermions by allowing $\Phi$ to be instead a sheaf of supergroups.  We won't need this generality for the examples of this paper.
\end{remark}

\begin{remark} \label{derived_remark}
There's an alternative, and more general approach, that we could take to defining theories where the fields include non-linear data, where we would model the fields as a sheaf of \emph{derived stacks} (using theory developed in particular by To\"en-Vezzosi \cite{HAGI, HAGII} and Lurie \cite{HigherAlgebra}).  A \emph{prestack} is a functor of $\infty$-categories $Z \colon \mr{cdga}_{\le 0}^{\mr{op}} \to \mr{sSet}$, where $\cdga_{\le 0}$ is the $\infty$-category of commutative dgas over $\CC$ concentrated in cohomological degrees $\le 0$, and $\sset$ is the $\infty$-category of simplicial sets.  One could define an \emph{abelian group prestack} to likewise be a functor of $\infty$-categories $Z \colon \mr{cdga}_{\le 0}^{\mr{op}} \to \mr{sAb}$, where $\mr{sAb}$ is the $\infty$-category of simplicial abelian groups.

The \emph{Dold-Kan correspondence} \cite[Theorem 1.2.4.1]{HigherAlgebra} provides us with an equivalence of $\infty$-categories
\[\mr{DK} \colon \mr{sAb} \to \mr{Ch}_{\le 0},\]
where $\mr{Ch}_{\ge 0}$ is the $\infty$-category of cochain complexes of abelian groups concentrated in degrees $\le 0$.  We could therefore define a Lagrangian theory whose fields are modelled by a sheaf of functors to the $\infty$-category $\mr{Ch}_{\ge 0}$.

Our main example (Example \ref{delignecochains} below) can be defined as a functor of points in this way.  To really work with this formalism however, it's not enough to work in the generality of all prestacks.  In order to make sense of classical observables we would need a well-behaved cotangent complex, so for instance it would be enough for the fields to be modelled by a derived Artin stack (see e.g. \cite{ToenSurvey} for an explanation of this idea).  By working only with classical Lagrangian theories with sectors as in Definition \ref{Lag_theory_def} we avoid having to address this issue in this paper.
\end{remark}

We'll construct examples of classical Lagrangian theories with sectors motivated by the following idea.

\begin{example}[Yang-Mills Theory]
We begin with an informal example, to motivate our main example \ref{delignecochains} below.  Let $G$ be a compact connected Lie group.  Define a presheaf $\Phi$ of abelian groups $X$ modelling the stack of connections on principal $G$-bundles: for an open set $U \sub X$ we set
\[\Phi(U) = \bigoplus_P \left( \Omega^0(U; \gg_P)[1] \to \Omega^1(U; \gg_P)  \right)\]
where the sum is over principal $G$ bundles $P \to U$ up to isomorphism.  We emphasise that this is an informal example for motivation only.  Indeed, in general this does not manifestly constitute an example of, even, a pretheory, since the complexes $\Omega^0(U; \gg_P) \to \Omega^1(U; \gg_P)$ are not isomorphic for different choices of $P$.

If $G$ is abelian then each summand models the space of connections on the bundle $P$ modulo the action of the algebra of infinitesimal gauge transformations of $P$ (for $G$ non-abelian we would need to account additionally for the data of the bracket).  We'll use a higher rank generalization of this construction.
\end{example}

The main objects of study in this paper are a family of theories generalising Yang-Mills theory with gauge group $U(1)$, following the description of generalized Maxwell theory through ordinary differential cochains described in \cite{Freed}.  The fields in this theory should describe ``circle $(p-1)$-bundles with connection'', or ``circle $(p-2)$-gerbes with connection'' for $p$ some positive integer (the reason we use $p-1$ will become clear later: the ``curvature'' of a field in such a theory will be a $p$-form).  

\begin{definition} \label{delignecochains}
The \emph{smooth Deligne complex} on a manifold $X$ is the sheaf of cochain complexes of abelian groups with sections on an open set $U$ given by
\begin{align*}
\ZZ(p)_{\mc D}(U) \xymatrix@R-12pt{= &\ZZ[p]\, \ar@{^{(}->}[r] &\Omega^0(U)[p-1] \ar[r]^d &\Omega^1(U)[p-2] \ar[r]^(0.7)d &\cdots \ar[r]^(0.35)d &\Omega^{p-1}(U) \\
\iso &&C^\infty(U, \RR/\ZZ)[p-1] \ar[r]^{-i d\log}  &\Omega^1(U)[p-2] \ar[r]^(0.7)d &\cdots \ar[r]^(0.35)d &\Omega^{p-1}(U)}.
\end{align*}
The \emph{complexified} Deligne complex has local sections given by
\[\ZZ(p)_{\mc D,\CC}(U) \xymatrix@R-12pt{= &C^\infty(U, \CC^\times)[p-1] \ar[r]^{-i d\log}  &\Omega^1(U; \CC)[p-2] \ar[r]^(0.7)d &\cdots \ar[r]^(0.35)d &\Omega^{p-1}(U; \CC)}.\]
We think of these complexified fields as higher principal $\CC^\times$ bundles with connection, or as higher complex line bundles with connection.
\end{definition}

\begin{definition}
The $p^\text{th}$ \emph{Deligne cohomology} group $\hat H^p(X)$ of a smooth manifold $X$ is defined to be the  $0^\text{th}$ hypercohomology group of the Deligne complex.
\end{definition}

\begin{remark} \label{Deligne_hypercohomology}
The degree $p$ differential cohomology group $\hat H^p(U)$) has a natural description.  We can compute it using the long exact sequence on hypercohomology associated to the short exact sequence of sheaves
\begin{equation} 
\label{ex_seq_formula} 0 \to \Omega^{\le p-1}_\CC[p-1] \to \ZZ(p)_{\mc D,\CC} \to 2 \pi R \ZZ[p] \to 0,
\end{equation}
which will tell us (see \cite[Theorem 1.5.3]{Brylinski}) that $\bb H^0(U; \ZZ(p)_{\mc D})$ is isomorphic to the product of a torus (on which there are no non-constant global functions, so it won't contribute to the local observables of our classical field theory) and the group $\Omega^p_{cl, \ZZ}(U; \CC)$ defined as follows.
\end{remark}

\begin{definition} \label{integral_periods_def}
The group $\Omega^p_{cl, \ZZ}(U; \CC)$ of closed $p$-forms with \emph{integral periods} is the subgroup of the group $\Omega^p_{\mr{cl}}(U;\CC)$ of closed $p$-forms comprising those forms $\alpha$ whose cohomology class $[\alpha]$ lies in the subgroup $\mr H^p(U; 2\pi R \ZZ) \sub \mr H^p(U; \CC)$. If $U$ is compact and oriented then this space splits as the product of an infinite-dimensional vector space and a finite rank lattice. 
\end{definition}

We'll need one more ingredient to define the action functional of our classical field theory: the notion of the curvature of a Deligne cochain.
\begin{definition}
The \emph{curvature map} is the map of presheaves of cochain complexes $F \colon \ZZ(p)_{\mc D} \to \Omega^p_{\cl}$ induced from the de Rham differential.
\end{definition}

Let us now introduce our main example of a classical Lagrangian field theory with sectors: we will define a classical field theory whose fields are given by the Deligne complex $\ZZ(p)_{\mc D}$. The \emph{action} on a compact manifold will be defined to be the $L^2$-norm of the curvature of a field.

\begin{definition}[Generalized Maxwell Theory] 
The \emph{generalized Maxwell theory} of degree $p$ on an $n$-manifold Riemannian manifold $X$ is the classical Lagrangian field theory $(\Phi^\Max_p, \LL_R)$ with sectors, where $\Phi^\Max_p = \ZZ(p)_{\mc D}$ is the Deligne complex, and $\LL_R$ is the density-valued functional
\[\mc L_R(\phi) = R^2 F(\phi) \wedge \ast F(\phi),\]
where $R$ is a positive real number and $\ast$ is the Hodge star associated to the metric on $X$.  The sheaf of fields can be written as an extension, as required by Definition \ref{Lag_theory_def}, using the exact sequence \ref{ex_seq_formula}, so that $\Phi_{\mr{sect}}$ is just $2 \pi R \ZZ[p]$, a rank one lattice placed in cohomological degree $-p$.  Note that $\Phi_{\mr{sect}}$ alone does not form a sheaf.
\end{definition}

If this density $\LL_R(\phi)$ is integrable, then the resulting integral is just the $L^2$-norm of $F(\phi)$.  We call the number $R$ here a \emph{coupling constant}, and think of it as the radius of the gauge group circle, pre complexification.  We could also have produced this scaling by redefining the fields: in our Deligne complex we might have included the lattice $2\pi R\ZZ$ instead of $\ZZ$, yielding a circle of radius $R$ in the cohomology.

\begin{remark}
We can generalize this setting from a circle (or, in the complexified story, $\CC^\times$) to a higher rank torus $T \iso V/\Lambda$, where $\Lambda$ is a full rank lattice in a real normed vector space $V$.  Let $V_\CC = V \otimes_\RR \CC$ and $T_\CC$ denote the complexifications of $V$ and $T$.  The fields in the theory described above generalize immediately by taking algebraic $T_\CC$ valued functions mapping into the algebraic de Rham complex with values in $V_\CC$.  There is now a curvature map taking values in $\Omega^p_{\cl} \otimes V_\CC$, and we define a Lagrangian density functional by
\[\mc L_\Lambda(\phi) = \|F(\phi) \wedge \ast F(\phi)\|^2\]
where $\|-\|$ is the norm on $V_\CC$.

In this description, the coupling constants arise from the choice of lattice $\Lambda \sub V$: for instance we obtain the theory with coupling constant $R$ above by choosing the lattice $2\pi R \ZZ \sub \RR$.
\end{remark}

\subsection{The Action Functional and the Classical Prefactorization Algebra} \label{sectionAction}
Once we have the sheaf of fields $\Phi$ we can apply the classical BV procedure to build a model for the derived critical locus of the action that is amenable to quantization.  The first step is to describe the shifted cotangent bundle $T^*[-1]\Phi$ as a derived stack, or -- to avoid requiring too much formalism from derived algebraic geometry -- to directly describe the algebra of functions $\OO(T^*[-1]\Phi)$ as a cochain complex.  We have some freedom to choose exactly how much our classical observables depend on the discrete datum of $\Phi_\sect$ -- we've chosen to allow the most general behavior, where the observables can act differently on each sector in an unconstrained way.

\begin{definition}
If $V$ is a cochain complex of topological vector spaces \footnote{More precisely, we usually require $V$ to be a \emph{differentiable vector space} as in \cite[Appendix B]{Book1}, in order to have well behaved homological algebra.  For instance, the sections of a complex of smooth vector bundles satisfy this condition.}, we define $\OO(V)$ to be the complex $\sym^\bullet(V^\vee)$, where $V^\vee$ is the continuous linear dual of $V$, with differential extended to the symmetric algebra as a derivation.
\end{definition}

\begin{definition}
Let $V \to \Phi \to \Phi_\sect$ be the presheaf of fields in a classical Lagrangian theory with sectors as in Definition \ref{Lag_theory_def}).  Let $\OO(\Phi)$ denote the cochain complex
\[\OO(\Phi) = \prod_{\mr H^0(U;\Phi_\sect)} \OO(V(U)).\]
Likewise, let $\OO(T^*[-1]\Phi(U))$ denote the cochain complex 
\[\OO(T^*[-1]\Phi(U)) = \prod_{\mr H^0(U;\Phi_\sect)} \OO(V(U)) \otimes \sym(V(U)[1]).\]
\end{definition}

\begin{remark}
A more sophisticated definition, using ideas from derived algebraic geometry (see Remark \ref{derived_remark}) to define the ring of functions $\OO(\Phi)$ on a derived stack of fields, would incorporate not only $\mr H^0$ of the presheaf $\Phi_\sect$, but the full derived object, including its cohomology in non-zero degrees.  In this work we have only used this simpler definition, in order to avoid having to work with these more subtle and difficult objects.
\end{remark}

This definition gives the ring $\OO(T^*[-1]\Phi)$ a natural interpretation as a ring of \emph{polyvector fields} on $\Phi$.  Indeed, we can identify $V$ with $T_0\Phi$: the tangent complex to $\Phi$ at the identity.  Then the dg-vector space $T_0\Phi \otimes \OO(\Phi)$ precisely describes vector fields on $\Phi$.  Placing this space in degree $-1$ and taking graded symmetric powers (i.e. alternating powers, by the usual sign rule), we produce the algebra of polyvector fields on $\Phi$.

Now we incorporate the action.  Recall that as well as the fields, our Lagrangian field theory data included a map of sheaves from $\Phi$ to the sheaf of densities on $X$.  While it is not, in general, possible to integrate the resulting local densities, it \emph{is} possible to define the first variation of this ``local action functional''.  

\begin{definition}
Let $(\Phi, \LL)$ be a Lagrangian theory with sectors.  Let $V_c$ denote the sheaf of compactly supported sections of $V$.  We define a compactly supported 1-form $\d S$ on $\Phi$, i.e. an element of $V_c^\vee \otimes \OO(\Phi) \iso \hom(V_c , \OO(\Phi))$ as
\begin{align*}
\d S \colon V_c &\to \OO(\Phi) \\
v &\mapsto \left(\phi \mapsto \int_U L_v(\LL(\phi)) \right),
\end{align*}
where $L_v(\LL(\phi))$ denotes the Lie derivative of the density $\LL(\phi)$ along the vector field $v$.  The compact support condition ensures that $L_v(\LL(\phi))$ is a compactly supported density, so that the integral is well-defined.
\end{definition}

The action functional $S$ describes a modified version of the shifted cotangent bundle by modifying the internal differential on its ring of functions.  After identifying the functions on the shifted cotangent bundle with polyvector fields, the 1-form $\d S$ naturally defines a degree one linear operator, namely the interior product
\[\iota_{\d S} \colon \OO(T^*[-1]\Phi) \to \OO(T^*[-1]\Phi).\]
More explicitly, the operator is extended as a derivation from the operator $V_c \otimes \OO(\Phi) \to \OO(\Phi)$ given by pairing a vector field with the 1-form $\d S \in V_c^\vee \otimes \OO(\Phi)$.

\begin{remark}
If the theory $(\Phi, \LL)$ is linear, i.e. if $\Phi_\sect$ is trivial, then we can describe the complex $\OO(T^*[-1]\Phi)$ of polyvector fields and the classical differential $\iota_{\d S}$ more concisely.  The global functions now form a symmetric algebra
\[\OO(T^*[-1]\Phi) \iso \sym(\Phi^\vee \oplus \Phi[1])\]
generated by linear functions and linear vector fields on $\Phi$.  We'll see this later in some examples of free theories, where the action is encoded by a linear operator $\Phi \to \Phi^\vee$ of degree one.
\end{remark}

The above discussion took place for a fixed open set $U \sub X$.  Let's now describe the relationship between the classical observables on different open sets.  We describe locality using the machinery of \emph{factorization algebras}, as developed in \cite{Book1}.  We recall the basic definitions.

\begin{definition}[{\cite[Chapter 3, Chapter 6]{Book1}}]
A \emph{prefactorization algebra} $\mc F$ on a space $X$ taking values in a symmetric monoidal category $\mc C$ with small colimits is a $\mc C$-valued precosheaf on $X$ equipped with $S_k$-equivariant isomorphisms 
\[\mc F(U_1) \otimes\cdots \otimes \mc F(U_k) \to \mc F(U_1 \sqcup \cdots \sqcup U_k)\]
for every collection $U_1, \ldots, U_k \sub X$ of disjoint open sets.

An open cover $\{U_i\}$ of a space $X$ is called \emph{factorizing} or \emph{Weiss} if for every finite subset $\{x_i, \ldots, x_\ell\}$ of $X$ there is a collection $U_{i_1}, \ldots, U_{i_\ell}$ of \emph{pairwise disjoint} sets in the cover such that $\{x_i, \ldots, x_\ell\} \sub U_{i_1} \cup \cdots \cup U_{i_\ell}$.

Given an open cover of $X$ and a precosheaf $\mc F$ on $X$ we can construct a simplicial object in $\mc C$ called the \emph{\v Cech complex} of $\mc F$, defined as
\[\check {C}(U, \mc F) = \bigoplus_{k=1}^\infty \left( \bigoplus_{U_{i_1}, \ldots, U_{i_k}} F(U_{i_1} \cap \cdots \cap U_{i_k})[k-1] \right)\]
with the usual \v Cech maps.

A prefactorization algebra is a \emph{factorization algebra} if for every open set $U \sub M$ and every factorizing cover $\{U_i\}$ of $U$, the natural map $\colim \check {C}(U, \mc F) \to \mc F(U)$ is an isomorphism in $\mc C$.
\end{definition}

\begin{definition}
The \emph{prefactorization algebra of classical observables} associated to the classical Lagrangian theory $(\Phi, \mc L)$ is the factorization algebra $\obscl_\Phi(U)$ valued in cochain complexes whose sections on $U$ are given by the complex $\OO(T^*[-1]\Phi)$ with differential given by the internal differential $\d_{\Phi}$ on $\OO(\Phi)$, plus the classical differential $-\iota_{\d S}$.
\end{definition}

\begin{remark}
If the theory is linear, then this construction defines a factorization algebra, rather than just a prefactorization algebra.  This fact appears as Theorem 4.5.1 in \cite{GwilliamThesis}.  It follows from the fact that $\Phi$ forms a sheaf, so the global functions $\OO(\Phi)$ forms a cosheaf.  This, however, does not hold when we introduce sectors: because $\mr H^0(-; \Phi_\mr{sect})$ does not generally define a sheaf, we cannot recover observables on open sets $U$ where $\Phi_{\mr{sect}}$ has interesting cohomology purely from observables on contractible open sets. 
\end{remark}

Finally, we need to address the \emph{Poisson structure} on the classical observables.  This is Definition 2.1.3 in \cite{GwilliamThesis}.

\begin{definition}
A \emph{$P_0$-prefactorization algebra} is a prefactorization algebra $\mc F$ valued in cochain complexes, such that each $\mc F(U)$ is equipped with a commutative product and a degree 1 antisymmetric map $\{,\} \colon \mc F(U) \otimes \mc F(U) \to \mc F(U)$ which is a biderivation for the product, satisfies the identity
\[\d\{x,y\} = \{\d x,y\} + (-1)^{|x|} \{x,\d y\}\]
and is compatible with the prefactorization structure.
\end{definition}

We expect such a structure on the classical observables coming from the \emph{shifted symplectic} structure on the shifted cotangent bundle, but we can recall a familiar concrete description (the well-known Schouten bracket on polyvector fields).  Firstly, there's an evaluation map defined as 
\begin{equation}
\label{evaluation_eqn}
\ev \colon V \otimes \OO(\Phi) \to \OO(\Phi)
\end{equation}
taking an element $v \otimes f$ to $\d f(v) \in \OO(\Phi)$ (thinking of the tangent vector $v \in V = T_0\Phi$ as a constant vector field on $\Phi$).  We use this to define the Schouten bracket in low polyvector field degrees:
\begin{align*}
\{1 \otimes f_1, 1 \otimes f_2 \} &= 0 \\
\{v_1 \otimes f_1, 1 \otimes f_2 \} &= 1 \otimes \d f_2(v_1) \cdot f_1 \\
\{v_1 \otimes f_1, v_2 \otimes f_2 \} &= v_2 \otimes df_2(v_1) \cdot f_1 - v_1 \otimes \d f_1(v_2) \cdot f_2
\end{align*}
This extends uniquely to an antisymmetric degree 1 pairing on the whole algebra of polyvector fields as a biderivation with respect to, as usual, the wedge product of polyvector fields.

\subsection{Quantization of Free Prefactorization Algebras} \label{quantization_section}
From now on we will restrict attention to \emph{free} field theories, where we can use the intuitive, non-perturbative notion of BV quantization described in Section \ref{BV_informal}.  Informally,  a classical Lagrangian field theory is free if the action functional is \emph{quadratic}, so the derivative of the action functional is \emph{linear}.

\begin{definition}
 A classical Lagrangian field theory is called \emph{free} if the classical differential $\iota_{\d S}$ increases polynomial degrees by one.  That is, if we filter each factor $\OO(V)_P$ in $\OO(\Phi)$ for an element $P \in \Phi_\sect$ by polynomial degree and call the $k^{\text{th}}$ filtered piece $F^k\OO(\Phi)_P$, the operator $\iota_{\d S}$ raises degree by one:
 \[\iota_{\d S} \colon \sym^i(V[1]) \otimes F^j\OO(\Phi)_P \to \sym^{i-1}(V[1]) \otimes F^{j+1}\OO(\Phi)_P.\]
\end{definition}

Now, let $(\Phi, \mc L)$ be a free classical theory with sectors, and let $\obscl(\Phi(U))$ be the complex of classical observables on an open set $U$.  We'll quantize the local observables by adding a new term to the differential on this complex: the \emph{BV operator}, which we'll denote by $D$.  This operator is built from the $P_0$-algebra structure on the classical observables, following the method of deformation quantization for free theories described in \cite{GwilliamThesis}.

\begin{definition}
The \emph{BV operator} $D \colon \OO(T^*[-1]\Phi(U)) \to \OO(T^*[-1]\Phi(U))$ is determined as follows.  Set $D$ to be zero on $\OO(\Phi(U))$, and to be given by the Poisson bracket in polynomial degree 1: $D = \{,\} \colon T_0\Phi(U) \otimes \OO(\Phi(U)) \to \OO(\Phi(U))$, i.e. the map we described in equation \ref{evaluation_eqn} above as ``evaluation''.  We can then extend this to an operator on the whole complex of classical observables according to the formula
\begin{equation} \label{BD_formula}
D(\phi \cdot \psi) = D(\phi) \cdot \psi + (-1)^{|\phi|} \phi \cdot D(\psi) + \{\phi, \psi\}.\end{equation}
\end{definition}

An algebra with a differential $D$ and Poisson bracket $\{,\}$ satisfying a formula like \ref{BD_formula} is called a \emph{Beilinson-Drinfeld algebra}, or \emph{BD algebra}: Beilinson and Drinfeld constructed in \cite{BD} a family of operads over the formal disc whose fibre at the origin is the $P_0$ operad.  The BD algebra structure given here is a description of an algebra for a generic fibre of the analogous family defined over all of $\CC$ rather than just a formal neighbourhood of the origin.

\begin{example}
If $\Phi$ is linear, so that the classical observables are given by $\sym(\Phi[1] \oplus \Phi^\vee)$, then we can construct the BV operator even more directly.  The Poisson bracket, restricted to $\sym^{\le 2}$ is given by the evaluation map $\Phi \otimes \Phi^\vee \to \CC$ from $\sym^2$ to $\sym^0$, and zero otherwise.  This extends uniquely to a degree 1 operator on the whole complex of classical observables, lowering $\sym$ degree by 2 as a BD structure, as above.
\end{example}

Equipped with this operator we can now define the quantum observables.
\begin{definition}
The prefactorization algebra of \emph{quantum observables} for the free theory $(\Phi, \mc L)$ with sectors is the prefactorization algebra with local sections on $U$ given by the cochain complex
\[\obsq(U) = (\OO(T^*[-1]\Phi(U)), \d_\Phi - \iota_{\d S} + D)\]
where $\d_\Phi$ is the differential coming from the internal differential on $\Phi$, $\iota_{\d S}$ is the classical BV differential, and $D$ is the quantum BV differential as defined above.
\end{definition}
In the linear case, the fact this procedure actually defines a factorization algebra is proved in \cite{GwilliamThesis} (see also \cite[Section 9]{Book2}).

\begin{remark}
A more standard thing to write would consider a differential $\d_\Phi - \iota_{\d S} + \hbar D$, and have the BV operator $D$ defined by a formula like
\[D(\phi \cdot \psi) = D(\phi) \cdot \psi + (-1)^{|\phi|} \phi \cdot D(\psi) + \hbar \{\phi, \psi\}\]
where we'd adjoined a formal parameter $\hbar$ to the algebra of classical observables.  We'd then obtain a module flat over $\RR[[\hbar]]$ which recovered the classical observables upon setting $\hbar$ to zero.  We haven't done this here, because when working exclusively with free theories it's possible to work completely \emph{non-perturbatively}, i.e. to evaluate at a non-zero value of $\hbar$.  The duality phenomena we're investigating are only visible non-perturbatively (taking into account all topological sectors) so this is necessary, however we lose the ability to consider the quantum observables in any theory which is not free.
\end{remark}

\subsection{Smearing Observables} \label{smearing}
In order to describe expectation values in Section \ref{sectionExpVal} we need to identify a dense subprefactorization algebra of the classical observables which are generated by polynomials in the fields, as opposed to their (distributional) dual spaces.  To do so, we restrict to the setting where $(\Phi, \LL)$ is linear, so the algebra of functions $\OO(\Phi(U))$ on the fields is a free commutative dg algebra, and
\[\OO(T^*[-1]\Phi(U)) = \sym(\Phi(U)^\vee \oplus \Phi(U)[1]).\]

Now, suppose further that the theory is free.  Then as an operator on this symmetric algebra the classical differential $\iota_{\d S}$ is non-increasing in $\sym$-degree.  It can be made to \emph{preserve} $\sym$-degree by completing the square in the action functional, in order to eliminate linear terms, and therefore it can be described as the extension of its \emph{linear} part, which we'll denote
\[Q \colon \Phi(U) \to \Phi(U)^\vee\]
to the whole complex, as a derivation.  Thus the classical observables are themselves given by the free cdga on a complex, namely the complex $\Phi(U)[1] \overset Q \to \Phi(U)^\vee$.  The dual space $\Phi(U)^\vee$ is ``bigger'' than the space $\Phi(U)$: it includes distributional sections of the underlying dual complex of vector bundles.  The procedure we'll describe in this section will replace it with a space of ordinary, smooth sections of a bundle.

The \emph{smeared} or \emph{smooth} classical observables form a dense subalgebra of $\obscl(U)$, defined using the data of an \emph{invariant pairing} on the space of fields.  We proceed in two steps.  

\begin{enumerate}
\item First, recall that $\Phi$ is obtained as the sheaf of sections of a complex $(E, \d)$ of vector bundles.  There is a dense subspace of $\Phi(U)^\vee$ given by the space of local sections of the dual complex $(E^*, \d^*)$ of vector bundles

\item Next, suppose that we're given an antisymmetric pairing on the underlying graded vector bundle $E^\natural$ of $E$ obtained by forgetting the differential:
\[\langle -,- \rangle \colon E^\natural \otimes E^\natural \to \CC,\]
taking values in the trivial bundle, which is non-degenerate as a scalar-valued pairing on the fibers.  This pairing allows us to identify the dual complex $E^*$ as the bundle $E^\natural$, with reversed grading, and equipped with the adjoint differential operator.
\end{enumerate}

\begin{definition}\label{smeared_bundle_def}
Let $\Phi^\text{sm}$ be the cosheaf of compactly supported sections of the vector bundle $E^\text{sm}$, with $E^\text{sm}_i = E_{-i}$, and with differential adjoint to the differential on $E$.
\end{definition}

The pairing defines a canonical embedding
\[\Phi^\text{sm}(U) \to \Phi^\vee(U)\]
with dense image in each degree, sending a compactly supported vector field to the functional ``pair with that vector field''. 

\begin{definition}
Suppose the classical map $Q \colon \Phi_c(U) \to \Phi(U)^\vee$ factors through a linear map $Q^{\text{sm}} \colon \Phi_c(U) \to \Phi^{\text{sm}}(U)$.  The classical \emph{smeared} or \emph{smooth} observables on an open set $U$ are defined to be the free cdga
\[\obssmcl_\Phi(U) = \sym(\Phi_c(U)[1] \overset {Q^{\text{sm}}} \to \Phi^{\text{sm}}(U)).\]
If $\OO(\Phi(U))$ is a free cdga for every open set $U \sub X$ then the smooth observables define a subprefactorization algebra of $\obscl_\Phi$, dense in every degree.
\end{definition}

\begin{definition} \label{elliptic_theory_def}
We call a free classical field theory equipped with an invariant pairing \emph{elliptic} if the resulting complex of \emph{linear} smeared observables 
\[\mc E(X) = \Phi^\text{sm}(X)[1] \overset {Q^{\text{sm}}} \to \Phi_c(X)\]
on the total space of the manifold $X$ arises as the sheaf of global sections of an elliptic complex of vector bundles.
\end{definition}

This is a fairly mild assumption that is satisfied in most realistic free physical theories on compact orientable manifolds (and the theory admits an extension to describe classical observables in interacting theories also, as described in \cite{Book2} and \cite{CostelloSH}).  We describe some free examples (which all admit interacting extensions described by Costello and Gwilliam).
\begin{example} \label{ellipticexamples}
\begin{enumerate}
 \item \textbf{Scalar field theories}\\
 Let $\Phi(U) = C^\infty(U)$, the sheaf of smooth functions on a compact Riemannian manifold, and let $\mc L$ be the Lagrangian density for a free scalar field of mass $m$, namely
 \[\mc L(\phi) = \d\phi \wedge \ast d\phi - m^2 \phi \wedge \ast \phi,\]
 where $\ast$ is the Hodge star.  The smeared classical observables in this theory are generated by the elliptic complex $C^\infty(X)[1] \overset Q \to C^\infty(X)$ where $Q = \Delta - m^2$.
 \item \textbf{Abelian Chern-Simons theory}\\
 Let $T$ be a torus, let $P \to X^3$ be a principal $T$ bundle on a compact 3-manifold, and let $\Phi(U)$ be the sheaf describing connections on $P$ (where we trivialize the torsor by choosing a fixed reference connection).  Chern-Simons theory on this fixed bundle is described by the complex $\Omega^*(U; \mf t)$, where $\mf t$ is the (abelian) Lie algebra of $T$.  Equivalently, it is obtained as the classical theory whose fields are given by the complex $\Omega^0(U; \mf t)[1] \to \Omega^1(U; \mf t)$ with Lagrangian density $\LL_{\mr{CS}}(A) = [A \wedge \d A]$.  Taking a Dolbeault complex on a complex manifold instead of a de Rham complex describes instead \emph{holomorphic} Chern-Simons theory (and indeed, in this language it makes sense to define Chern-Simons theory in any dimension as the theory whose algebra of classical observables are built from this complex).
 \item \textbf{Abelian Yang-Mills theory}\\
 Let $T$ be a torus, let $P \to X^4$ be a principal $T$ bundle on a compact Riemannian 4-manifold, and let $\Phi(U)$ be the sheaf describing connections on $P$ as above.  Yang-Mills theory on this fixed bundle is described by the shifted cotangent to the Atiyah-Singer-Donaldson complex describing anti-self-dual connections on $P$, that is, the complex
 \[\xymatrix{
  \Omega^0(U; \mf t) \ar[r]^\d &\Omega^1(U; \mf t) \ar[r]^{\d_+} &\Omega^2_+(U; \mf t) \\
  &\Omega^2_+(U; \mf t) \ar[r]^\d \ar[ur]^1 &\Omega^3(U; \mf t) \ar[r]^\d &\Omega^4(U; \mf t)
 }\]
 in degrees -1 to 2.  Here $\Omega^2_+(U; \mf t)$ denotes the space of self-dual 2-forms and $\d_+$ is the composition of $\d$ with projection onto this space.  
\end{enumerate}
\end{example}
We'll see shortly examples of \emph{$p$-form} theories that also fit into this framework.

\begin{definition}
The \emph{quantum} smeared observables on an open set $U$ are given by the cochain complex with the same underlying graded abelian group as the classical smeared observables, but with differential $\sym(Q^\text{sm}) + D$, where $D$ is the \emph{smeared BV operator} extended from the operator $\sym^2(\mc E(U)) \to \sym^0(\mc E(U))$ given by the invariant pairing restricted to $\mc E(U)$ according to the BD product formula 
\[D(\phi \cdot \psi) = D(\phi) \cdot \psi + (-1)^{|\phi|} \phi \cdot D(\psi) + \{\phi, \psi\}.\]
\end{definition}

\begin{remark}
The quantum smeared observables embed into the whole complex of quantum observables as a subcomplex, dense in each degree.  To see this one just needs to check that the quantum BV operators commute with the inclusion, which follows directly from the definitions.
\end{remark}

\section{Generalized Maxwell Theories as Prefactorization Algebras} \label{sectionGenMaxwell}
Having described the general formalism, we'll explain the specific theories which we'll be studying: the \emph{generalized Maxwell theories}, a family of theories including as its simplest two examples sigma models with target a torus and abelian pure Yang-Mills theories.

\subsection{Generalized Maxwell Theories}
\subsubsection{The Classical Prefactorization Algebra}
The construction of the previous chapter yields a classical prefactorization algebra of observables from the above data.  We can describe the local sections fairly concretely.  

\begin{lemma}
The local observables $\obscl_{\pmp}(U)$ associated to the generalized Maxwell theory on an open set $U$ can be identified as the product over $\mr H^p(U;2\pi R \ZZ)$ of the free cdga generated by the complex
\[\Omega_c^{\le p-1}(U; \CC)[p-1] \overset Q \to \Omega^{\le p-1}(U; \CC)^\vee[1-p],\]
where $Q$ is the linear operator sending a compactly supported $(p-1)$-form $\alpha$ to the linear functional $\beta \mapsto \int \d \beta \wedge \ast \d \alpha$ on the space of $(p-1)$-forms.
\end{lemma}

\begin{proof}
We start with the algebra of polyvector fields
\[\PV(\pmp(U)) = \sym(V(U)) \otimes \OO(\pmp(U)),\]
where $V(U) = \Omega^{\le p-1}(U; \CC)[p-1]$ is the shifted truncated de Rham complex.  We must modify this algebra by incorporating the classical differential coming from the action functional (or rather, its first variation $\d S$).  This is a cochain map 
\[\Omega^{\le p-1}(U; \CC)[p-1] \to \OO(\pmp(U)),\]
i.e. a linear map from $\Omega^{p-1}(U;\CC)$ that vanishes on exact $(p-1)$-forms.  We can define such an operator using the curvature map $F \colon \pmp(U) \to \Omega^p_{\cl}(U)$.  This induces a pullback map $F^* \colon \OO(\Omega^p_{cl}(U)) \to \OO(\pmp(U))$.  By composing with the curvature map it suffices to define the classical differential as the map $\Omega^{p-1}_c(U; \CC) \to \OO(\Omega^p_{cl}(U; \CC)) \overset {F^*} \to \OO(\pmp(U))$
\[\alpha \mapsto \left( \beta \mapsto \int_U \beta \wedge \ast \d\alpha \right) \mapsto \left( A \mapsto \int_U F_A \wedge \ast \d\alpha \right) = \iota_{\d S}(\alpha).\]
This functional, sending a field $A$ to $\int_U F_A \wedge \ast \d\alpha$, recovers the first variation of the required action functional.  On each sector, associated to a class in $\mr H^p(U; 2\pi R\ZZ)$, this map is just the map $Q$ sending $\alpha$ to the linear functional $\beta \mapsto \int \d \beta \wedge \ast \d \alpha$ on the space of $(p-1)$-forms as required.
\end{proof}

\subsubsection{The Quantum Prefactorization Algebra}
Now, we know abstractly how to quantize this prefactorization algebra, but we should see what it actually means in this context.  The following statement is an unwinding of the definitions of Section \ref{quantization_section} for the example of generalized Maxwell theories.

\begin{lemma} \label{quantum_pmp_lemma}
The local quantum observables $\obsq_{\pmp}(U)$ associated to the generalized Maxwell theory on an open set $U$ can be identified as the deformation of $\obscl_{\pmp}(U)$ by the BV operator $D$ generated, according to the BD operator formula \ref{BD_formula}, by an evaluation map $\ev \colon \Omega^{p-1}_c(U; \CC) \otimes \OO(\pmp(U)) \to \OO(\pmp(U))$ (\ref{evaluation_eqn}) defined as follows.
\begin{enumerate}
 \item Start with an element $\chi \otimes f \in \Omega^{p-1}_c(U; \CC) \otimes \OO(\pmp(U))$.  We first take the exterior derivative of both $\chi$ and $f$ to yield
 \[\d\chi \otimes \d f \in \Omega^{p}_{c,cl}(U; \CC) \otimes \Omega^1(\pmp(U)) \iso \Omega^{p}_{c,cl}(U; \CC) \otimes \OO(\pmp(U)) \otimes \Omega^p_{cl}(U; \CC)^\vee.\]
 \item Use the evaluation pairing between $\Omega^p_{cl}(U; \CC)^\vee$ and $\Omega^{p}_{c,cl}(U; \CC)$ (that is, between linear vector fields and linear 1-forms on the fields) to produce a contracted element $\d f(\d\chi) \in \OO(\pmp(U))$ as required.
\end{enumerate}
\end{lemma}

From now on, when we write $\obsq(U)$ we'll be referring specifically to the quantum observables in a generalized Maxwell theory (which one will generally be clear from context).  

We'll write $\obsq(U)_0$ to refer specifically to the \emph{gauge invariant degree zero} observables: the part of the cochain complex that refers to actual observables in the usual sense of the word (that is, functionals on the physical, i.e. degree 0, fields), as opposed to encoding relationships between observables.  The notation doesn't refer to the entire degree zero part of the cochain complex, but rather to the following subcomplex.

\begin{definition} \label{deg_0_observables}
Write $\obsq(U)_0$ to indicate the complex $\OO(\mr H^0(\Phi(U))) \sub \obsq_\Phi(U)$ equipped with the quantum BV differential, where we're using the fact that the projection $\Phi(U) \to \mr H^0(\Phi(U))$ induces a pullback map $\OO(\mr H^0\Phi(U))) \to \OO(\Phi(U))$.
\end{definition}

\begin{remark}
This notion is only well-defined because our quantum field theories are free; in the presence of an interaction there is an additional differential in the complex of local observables which needn't descend to a well-defined operator on $\mr H^0(\Phi(U))$.
\end{remark}

\subsection{Free Theories from $p$-forms}
In order to do calculations with Maxwell theories we will relate them to simpler free field theories where the fields are sheaves of $p$-forms.  This will correspond, intuitively, to considering observables that factor through the curvature map $\widehat {\mr H}^p(U) \to \Omega^p_{cl}(U)$.  These theories will be especially straighforward in the sense that the action functional will involve no derivatives at all, so the classical BV operator will be a differential operator of degree zero.

\begin{definition}
Fix $0 < p < n$ as before.  The \emph{free $p$-form theory} on $X$ with coupling constant $R$ is the Lagrangian field theory with sheaf of fields given by the sheaf of vector spaces $\Omega^p$ and action functional
\[S_R(\alpha) = R^2\|\alpha\|^2_2 = R^2 \int_X \alpha \wedge \ast \alpha\]
given by the $L^2$-norm.  The \emph{free closed $p$-form theory} with coupling constant $R$ is the theory with sheaf of fields $\Omega^{\ge p}[p]$ -- a resolution of the sheaf of closed $p$-forms, with the same action functional.
\end{definition}

We can build classical and quantum factorization algebras directly from this data as a very easy application of the BV formalism described above: we'll denote them by $\obscl_{\Omega^p}$, $\obsq_{\Omega^p}$ etc, with the choice of $R$ surpressed.  For the general $p$-form theory we compute 
\[\obscl_{\Omega^p}(U) = \sym(\Omega^p_c(U)[1] \overset {R^2 \iota} \to \Omega^p(U)^\vee)\]
where $\iota$ is the inclusion of $\Omega^p_c(U)$ into the dual space given by the $L^2$-pairing.  This follows directly from the construction of the action functional given in Section \ref{sectionAction}: this operator describes the first variation of the action functional. 

The quantum BV operator $D$ is induced from the evaluation pairing $\Omega^p_c(U) \otimes \Omega^p(U)^\vee \to \RR$.  We can produce a smeared complex of quantum observables using the standard $L^2$ pairing on $p$-forms coming from the Riemannian metric on $X$.  That is, we have local smeared quantum observables given by
\[\obssm_{\Omega^p}(U) = (\sym(\Omega^p_c(U)[1] \oplus \Omega^p_c(U)), \sym(\cdot R^2) + D)\]
where $\cdot R^2$ is now just a scalar multiplication operator, and $D$ is the operator extended from the $L^2$ pairing as a map $\sym^2 \to \sym^0$ according to the usual BD formula.  This complex is quasi-isomorphic to $\RR$ for every $U$.  Indeed, by a spectral sequence argument it suffices to check this for the classical observables, where we're computing $\sym$ of a contractible complex.

\subsubsection{The Closed $p$-Form Theory}
The closed $p$-form theory is similar until we smear.  That is, the quantum factorization algebra is given as the complex
\[\obsq_{\Omega^p_{cl}}(U) = (\sym(\Omega^{\ge p}_{c}(U)[p+1] \oplus \Omega^{\ge p}(U)^\vee[-p]), \sym(R^2 \iota) + D)\]
where $D$ is induced from the evaluation pairing as above.  The theory is not quite the same after smearing, in particular it is no longer locally contractible.  

\begin{lemma}
There is a smeared version $\obssm_{\Omega^p_{cl}}(U)$ of the classical factorization algebra for the closed $p$-form theory, given by the free algebra generated by the cochain complex
\begin{equation} \label{smeared_closed}
\xymatrix{
&&\Omega^p_c(U)[1] \ar[rd]^{2\pi R} \ar[r]^{\d} & \Omega^{p+1}_c(U) \ar[r]^{\d} &\cdots \ar[r]^(.25){\d} &\Omega^n_c(U)[p-n+1] \\
\Omega^n_c(U)[n-p] \ar[r]^(.75){\d^*} &\cdots \ar[r]^(.4){\d^*} &\Omega^{p+1}_c(U)[1] \ar[r]^{\d^*} &\Omega^p_c(U),
}\end{equation}
where $\d^*$ is the adjoint to the de Rham differential $\d$ associated to the metric, deformed by the BV operator $D$ is induced by the $L^2$-pairing. 
\end{lemma}

\begin{proof}
To see that this is a well-defined smearing of the classical observables we only need to note that the triangle of complexes
\[\xymatrix{
\Big( \Omega^p_c(U) \ar[rrd]^{2\pi R} \ar[r]^\d &\cdots \ar[r]^\d   &\Omega^n_c(U) \Big) \ar@<10pt>[rrd] &&&\\
\Big( \Omega^n_c(U) \ar[r]^{\d^*} &\cdots \ar[r]^{\d^*} &\Omega^p_c(U) \Big) \ar[r] &\Big(\Omega^p(U) \ar[r]^\d &\cdots \ar[r]^\d &\Omega^n(U) \Big)^\vee
}\]
commutes (the arrow from top to bottom-right is the classical operator $Q$ and the arrow from top to bottom-left is the smeared classical operator $Q^{\text{sm}}$).  Then, to ensure that the smeared BV operator is well-defined, we use the fact that local closed and local coexact forms are orthogonal with respect to the $L^2$-pairing.
\end{proof}

This story proceeds identically for complex-valued forms, which we'll use from now on.  The complexification of the closed $p$-form theory is -- by design -- closely related to the generalized Maxwell theory of order $p$: the complexes of observables are isomorphic on open sets $U$ where $H^p(U;\ZZ) = 0$, and one has a natural map on \emph{degree zero} observables $\obsq_{\Omega^p_{cl}}(U; \CC)_0 \to \obsq_{\pmp}(U)_0$ for any $U$.  This map should be thought of as the inclusion of those observables that factor through the curvature map. 

\begin{lemma} \label{closed_Maxwell_relation_lemma}
There is a morphism $\obsq_{\Omega^p_{\cl}} \to \obsq_{\pmp}$ of prefactorization algebras induced by the curvature map 
\[F\colon \ZZ(p)_{\mc D} \to \Omega^{\ge p}[p]\]
of sheaves of cochain complexes.  This map is a quasi-isomorphism on contractible open sets (but not generally, since $\obsq_{\pmp}$ is only a prefactorization algebra).
\end{lemma}

\begin{proof}
We first extend the degree zero map to a map on all classical observables by pulling back polyvector fields.  Concretely this is the map
\[F \colon \sym(\Omega^{\ge p}_{c}(U; \CC)[p+1]) \otimes \OO(\Omega^p_{cl}(U; \CC)[p]) \to \sym(\Omega^{\le p-1}_c(U;\CC)[p]) \otimes \OO(\Phi(U))\]
given by $F^*$ in the second factor and by the formal adjoint $\d^* \colon \Omega^p_c(U;\CC) \to \Omega^{p-1}_c(U;\CC)$ to the de Rham differential in the second factor.  One needs to check that this is compatible with the classical BV operator $Q$, which requires observing that the square
\[\xymatrix{
 \Omega^{\ge p}_{c}(U; \CC)[p+1] \ar[r] \ar[d]_{\d^*} &\OO(\Omega^{\ge p}(U; \CC)[p]) \ar[d]^{F^*} \\
 \Omega^{\le p-1}_c(U;\CC)[p]  \ar[r]  &\OO(\pmp(U)) 
}\]
commutes, where the horizontal arrows are those maps defining the Poisson brackets.  If, further, the open set $U$ is contractible then this map defines a \emph{quasi-isomorphism} of the complex of local classical observables.

We then need to check that our map commutes with the quantum BV operator; that is, we check that the triangle
\[\xymatrix{
\Omega^{\ge p}_{c}(U; \CC) \otimes (\Omega^{\ge p}(U; \CC))^\vee \ar[r] \ar[d]_{\d^* \otimes F^*} &\CC \\
\Omega^{\le p-1}(U;\CC) \otimes \OO(\pmp(U)) \ar[ur]^D
}\]
commutes, where $D$ is the quantum BV operator in the generalized Yang-Mills theory.  This is clear from the definition of the map $D$ as in Lemma \ref{quantum_pmp_lemma}, which first applies the exterior derivative to the linear vector field and the linear functional, then pairs the result via the $L^2$-pairing.  Again, if $U$ is in fact contractible then the resulting map on local observables is actually a quasi-isomorphism.
\end{proof}

\section{Expectation Values} \label{sectionExpVal}
In this section we'll explain how to compute vacuum expectation values of observables in free theories where the fields are given by a cochain complex of vector spaces, such as the closed $p$-form theories introduced in the previous section.  While the method does not apply directly to free theories with a more complicated space of fields, we show that the expectation value can be computed by functional integrals, which \emph{will} generalize to more complicated settings like that of generalized Maxwell theories.

\subsection{Expectation Values from Free Quantum Prefactorization Algebras}
For an elliptic theory $(\Phi, \mc L)$, consider the complex of global classical observables $\obs(X)$ with underlying graded vector space $\OO(T^*[-1]\Phi)$.  This cdga is freely generated by a cochain complex, and its smeared version is assumed to be freely generated by an elliptic complex $\mc E$ as described in Section \ref{smearing}.  Suppose that $X$ is compact.  Now, Hodge theory gives us a Laplacian operator $\Delta \colon \mc E \to \mc E$ and a splitting in each degree: $\mc E_i = \mc H_i \oplus \mc H_i^\perp$, where $\mc H_i$ denotes the finite-dimensional vector space of \emph{harmonic} elements in degree $i$.  In particular, we can apply this to the degree zero elements $\mc E_0$, which represent a linearization of the space of the (degree zero) \emph{fields} in our theory.

\begin{hypothesis}
From now on, $X$ will always denote a compact Riemannian manifold without boundary.  In particular, Hodge theory tells us that if $\mc E$ is the sheaf of sections of an elliptic complex of vector bundles on $X$, then the complex of global sections of $\mc E$ has finite-dimensional cohomology.
\end{hypothesis}

\begin{definition}
The Hodge decomposition defines a splitting
\[\mc E_0 = \mc M \oplus \mc M^\perp\]
where $\mc M$ (or equivalently, $\mc H_0$ in the notation above) denotes the finite-dimensional space of harmonic fields.  We call $\mc M$ the space of \emph{massless modes} of the theory, and $\mc M^\perp$ the space of \emph{massive modes}.
\end{definition}

The terminology comes from the example of the free scalar field.
\begin{example}
Let $\Phi(U) = C^\infty(U)$ with the action for a free scalar field of mass $m \ge 0$, i.e.
\[S(\phi) = \int_X \left(\phi \Delta \phi - m^2 |\phi|^2\right) \dvol.\]
The complex of smeared quantum global observables here, as remarked upon in \ref{ellipticexamples} has underlying graded vector space $\sym(C^\infty(X)[1] \oplus C^\infty(X))$, classical differential $\sym(\Delta - m^2)$ and quantum differential induced from the $L^2$-pairing on functions.  The relevant elliptic complex then is the two-step complex
\[C^\infty(X) \overset {\Delta-m^2} \to C^\infty(X)\]
in degrees $-1$ and 0.  The cohomology of this complex is finite-dimensional since $X$ is assumed to be compact, so the $m^2$-eigenspace of the Laplacian is finite-dimensional (or just by Hodge theory, since the complex is elliptic). 

Focusing on the case $m=0$, the Hodge decomposition splits $C^\infty(X)$ in each degree as a sum of eigenspaces for $\Delta^2$, or equivalently as a sum of eigenspaces for $\Delta$.  The eigenspaces are the spaces of solutions to $\Delta \phi = \lambda \phi$, which we think of as the energy $\sqrt \lambda$ pieces of the space of fields (by analogy with the Klein-Gordon equation in Lorentzian signature).  The cohomology is then represented by the harmonic / massless piece.
\end{example}

\begin{lemma}
Suppose $(\Phi, \LL)$ is an elliptic classical field theory on a manifold $X$ as in Definition \ref{elliptic_theory_def}, and suppose that the zeroth cohomology $\mr H^0(\mc E)$ of the elliptic complex $\mc E$ is trivial (for instance, if $X$ is compact as above, and the theory has no massless modes).  Then the space of global degree 0 quantum smeared observables $\obssm_\Phi(X)_0$ (as in Definition \ref{deg_0_observables}) is quasi-isomorphic to $\CC$.
\end{lemma}

\begin{proof}
By definition, the smeared \emph{classical} degree 0 observables on $X$ are given by the symmetric algebra $\sym(\mr H^0(\mc E))$ (note: not the larger object $\mr H^0(\sym(\mc E))$), which is isomorphic to $\sym(0) = \CC$ in degree zero by assumption. To see this is also true for the smeared \emph{quantum} degree 0 observables we use a simple spectral sequence argument, using the filtration of the complex by $\sym$ degree.  The BV operator is extended from the map from $\sym^2$ to $\sym^0$ by the $L^2$ pairing, so in general lowers $\sym$-degree by two.  The $E_1$ page of the spectral sequence computes the cohomology of the classical complex of smeared observables (i.e. the cohomology with respect to only the $\sym$ degree 0 part of the differential), and the spectral sequence converges to the cohomology of the complex of smooth quantum observables (i.e. the cohomology with respect to the entire differential).  Since the $E_1$ page is quasi-isomorphic to $\CC$ in degree 0, so must be the $E_\infty$ page.
\end{proof}

\begin{remark} \label{weaker_remark}
In fact it's enough to assume a weaker condition: instead of requiring that $\mr H^0(\mc E)$ vanishes, it's enough to assume that every degree zero element of the factor $\Phi_c$ of $\mc E$ coming from ordinary fields, rather than BV antifields, vanishes in cohomology.
\end{remark}

Note that in these circumstances there is a \emph{canonical} quasi-isomorphism from the global smeared observables to $\CC$, characterized by the property that the class of the observable 1 in $\sym^0$ maps to 1.

\begin{definition}
The \emph{expectation value} in an elliptic theory with trivial cohomology is the canonical isomorphism
\[(\obssm_\Phi(X))_0 \to \CC\]
sending $[1]$ to 1.
\end{definition}

\begin{hypothesis}
From now on we'll always suppose that the complex $\mc E$ satisfies the condition of Remark \ref{weaker_remark}, so that we can define the expectation value of a degree 0 observable.
\end{hypothesis}

\begin{remark}
With the setup we've been using, on compact $X$ the idea that the theory had no massless modes was essential.  The massless modes correspond to the locus in $\mc E_0$ where the action functional vanishes, so the locus where the exponentiated action is \emph{degenerate}.  Since we'll be computing expectation values as a limit of finite dimensional Gaussian integrals it will be important to ensure that there are no massless modes so that the Gaussian is non-degenerate, and so the finite dimensional Gaussian integrals give finite answers.

It might be possible, in a somewhat different formalism, to work with a non-linear space of fields splitting into a \emph{linear} space of massive modes and a \emph{compact finite-dimensional} moduli space of massless modes.  One could then describe an expectation value by integrating out the space of massive modes over each point in the moduli space of massless modes (using the formalism we will describe below) to produce a section of a rank 1 local system.  If this local system was actually trivialisable then such a section could be integrated to give a number.  Failure of trivialisability would be an example of an \emph{anomaly} for a free field theory.
\end{remark}

\begin{example}
The theory of $p$-forms has smeared complex $\mc E = \Omega_c^p \overset {R^2} \to \Omega_c^p$, which is contractible.  This theory, therefore, has no massless modes.
\end{example}

\begin{example}
The theory of closed $p$-forms has smeared complex as in \ref{smeared_closed}.  This complex \emph{does} have cohomology in degree 0, but all non-zero degree 0 cohomology classes are represented by a cocycle of the form $(0, \alpha) \in \Omega^{p+1}_c(U) \oplus \Omega^{p}_c(U)$, and therefore this theory satisfies the weaker condition of Remark \ref{weaker_remark}.
\end{example}

\subsection{Computing Expectation Values}
The ideas of this section are not original, but are merely a recollection of physical techniques, whose analysis is well understood, in the present context. A modern mathematical account of the relationship between functional integrals, Feynman diagrams and homological algebra can be found in \cite{GwilliamJohnsonFreyd}.

\subsubsection{Feynman Diagrams for Free Theories}
So, let's fix a free elliptic theory with fields $\Phi$ and no massless modes: for instance a $p$-form or closed $p$-form theory on a compact manifold $X$.  The idea of the Feynman diagram expansion is to compute expectation values of observables in our theory combinatorially.  The crucial idea that we'll use in order to check that we can do this is that --for smeared observables -- the expectation value map is \emph{uniquely characterized}.  That is, for smeared observables there is a unique quasi-isomorphism from global smeared observables to $\CC$ that sends 1 to 1.  Therefore to check that a procedure for computing expectation values is valid it suffices to check that it is a non-trivial quasi-isomorphism, then rescale so the map is appropriately normalized.

Take a global degree zero smeared observable $\OO \in \sym(\Phi(X)_0)$ which is \emph{gauge invariant}. That is, we consider observables that can be written as a product of linear observables
\[\OO = \OO_1^{n_1}\OO_2^{n_2} \cdots \OO_k^{n_k}\]
where $\OO_1, \ldots \OO_k$ are linearly independent linear smeared observables in $\Phi(X)_0$ (not necessarily gauge invariant themselves), and where $\OO$ is \emph{closed} in the classical (or equivalently quantum) complex of smeared observables. This corresponds to gauge invariance because the only differential \emph{from} the degree zero observables in either the classical or quantum complexes of smeared observables comes from $\sym$ of the underlying differential on $\Phi$.  Closed elements are observables in the kernel of $\sym(d^\vee)$ where $\d \colon \Phi_{-1} \to \Phi_0$ is the underlying differential, and where the adjoint $\d^\vee$ is defined on $\Phi_0$ using the invariant pairing on fields.  The kernel of $\d^\vee$ corresponds exactly to the cokernel of $\d$, which is the space of degree zero observables we think of as invariant under gauge transformations.  

Generally gauge invariant polynomial observables are sums of monomial observables of this form, and we extend the procedure of computing duals linearly, so it suffices to consider $\OO$ of this form.

We compute the expectation value of $\OO$ combinatorially as follows.  Depict $\OO$ as a graph with $k$ vertices, and with $n_i$ half edges attached to vertex $i$.  The expectation value $\langle \OO \rangle$ of $\OO$ is computed as a sum of terms constructed by gluing edges onto this frame in a prescribed way.  Specifically, we attach \emph{propagator edges} -- which connect together two of these half-edges -- in order to leave no free half-edges remaining.  A propagator between linear observables $\OO_i$ and $\OO_j$ receives weight via the pairing 
\[\frac 12 \int_X \langle\OO_i, Q^{-1} \OO_j\rangle\]
where $Q$ is the classical BV operator, $Q^{-1}$ is defined by inverting $Q$ on each eigenspace for the corresponding Laplacian (using the non-existence of massless modes), and $\langle -,-\rangle$ is the invariant pairing on smeared observables.  A diagram is weighted by the product of all these edge weights.  The expectation value is the sum of these weights over all such diagrams.
\begin{figure}[ht]
 \centering
 \includegraphics[height=3.5cm]{./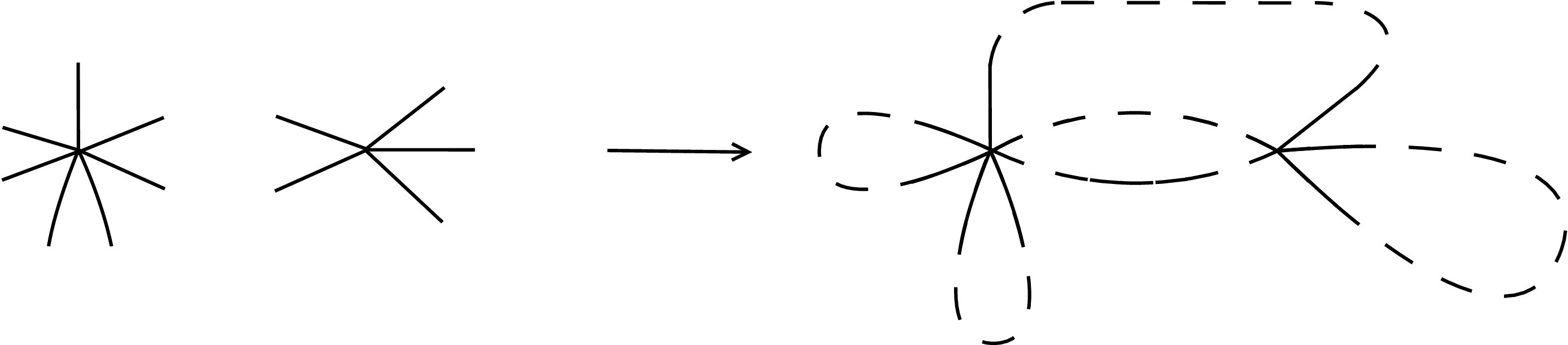}
 \caption{One of the terms in the Feynman diagram expansion computing the expectation value of an observable of form $\OO_1^7\,\OO_2^5$.  On the left we see the starting point, with half-edges, and on the right we see one way of connecting these half-edges with propagator edges (indicated by dashed lines).}
\end{figure}

To check that this computes the expectation value, we must show that it is non-zero, and that it vanishes on the image of the differential in the complex of quantum observables.  The former is easy: the observable 1 has expectation value 1 (so we're also already appropriately normalized).  For the latter, we'll show that the path integral computation for degree zero global observables in $\obsq(U)_0$ arise as a limit of finite-dimensional Gaussian integrals, and that the images of the quantum BV differential are all divergences, so vanish by Stokes' theorem.

\subsubsection{Regularization and the Path Integral}
The classical complex of linear observables in our theory is elliptic, so induces a Laplacian operator $\Delta$ acting on $\mc E_0$ with discrete spectrum $0 < \lambda_1 < \lambda_2 < \cdots$ and finite-dimensional eigenspaces.  Let $F_k\mr H^0(\Phi(X))$ denote the sum of the first $k$ eigenspaces: this defines a filtration of the global degree zero (linearized) fields by finite-dimensional subspaces.  We recall a standard result about infinite dimensional Gaussian integrals.

\begin{prop} \label{expvals_gaussian}
Let $\OO$ be a smeared global observable.  The finite-dimensional Gaussian integrals
\[\frac 1{Z_k} \int_{F^k\mr H^0(\Phi(X))} \OO(a) e^{-S(a)} da,\]
where $Z_k$ is the volume $\int_{F^k\mr H^0(\Phi(X))} e^{-S(a)} da$, converge to a real number $I(\OO)$ as $k \to \infty$, and this number agrees with the expectation value computed by the Feynman diagrammatic method.
\end{prop}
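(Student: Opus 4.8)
\subsection*{Proof sketch}

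The plan is to evaluate each finite-dimensional integral by Wick's theorem and then pass to the limit, with the only real work being an analytic estimate on the tail of an eigenfunction expansion. First I would reduce to the case where $\OO = \OO_1^{n_1}\cdots\OO_k^{n_k}$ is a monomial in linearly independent linear smeared observables $\OO_1,\dots,\OO_k \in \Phi(X)_0$, the general polynomial case following by linearity; gauge invariance (closedness of $\OO$) is not needed for convergence or for the comparison with diagrams, only for the interpretation of the answer as an expectation value. On each truncation $F^kH^0(\Phi(X))$ the restriction of $S$ is a positive-definite quadratic form --- non-degenerate precisely because there are no massless modes --- which is block-diagonal with respect to the Hodge decomposition into $\Delta$-eigenspaces; hence $Z_k$ is finite and nonzero and $e^{-S(a)}\,da$ is an honest Gaussian measure on a finite-dimensional space.

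Wick's theorem (Isserlis' formula) then gives, for $N = \sum_i n_i$,
\[
\frac{1}{Z_k}\int_{F^kH^0(\Phi(X))} \OO(a)\, e^{-S(a)}\, da \;=\; \sum_{\text{matchings } M}\ \prod_{(i,j)\in M} \frac{1}{2}\int_X \langle \OO_i,\, Q_k^{-1}\OO_j \rangle,
\]
the sum running over perfect matchings of the $N$ tensor factors (a factor of $\OO_i$ for each of its $n_i$ copies), and $Q_k^{-1}$ denoting the inverse of the classical BV operator $Q$ on the span of the first $k$ eigenspaces. This is manifestly the truncation of the Feynman diagrammatic sum of the previous subsection: the vertices carry $\OO_i$ with multiplicity $n_i$, the copies of $\OO_i$ are the half-edges at vertex $i$, a matching is a choice of propagator edges joining all half-edges, and the edge weight is exactly $\frac{1}{2}\int_X\langle\OO_i,Q^{-1}\OO_j\rangle$. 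Since the set of matchings is finite and independent of $k$, it suffices to show that each truncated edge weight converges, as $k\to\infty$, to the corresponding weight in the diagrammatic formula; term-by-term convergence then yields convergence of the Gaussian integral to the diagrammatic answer $I(\OO)$.

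The convergence of edge weights is the heart of the matter. Expanding $\OO_i$ and $\OO_j$ in an $L^2$-orthonormal basis of $\Delta$-eigenforms, the difference of the full and truncated weights is a tail sum, over eigenvalues $\lambda_m$ with $m>k$, of the coefficient of $\OO_i$ on the $\lambda_m$-eigenspace times the eigenvalue of $Q^{-1}$ there times the coefficient of $\OO_j$. The key point is that $\OO_i$ and $\OO_j$ are \emph{smeared}, i.e.\ represented by smooth forms, so by elliptic regularity their eigenform coefficients decay faster than any power of $\lambda_m$, whereas the eigenvalues of $Q^{-1}$ grow at most polynomially in $\lambda_m$ --- indeed they are bounded in the $p$-form, closed $p$-form and scalar examples, where $Q$ is a scalar or a first- or second-order operator, so $Q^{-1}$ is bounded on the massive modes. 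Consequently the tail tends to zero, and the double series defining $I(\OO)$ converges absolutely. This is exactly the step at which the restriction to smeared observables is indispensable: for a general distributional observable the eigenform coefficients need not decay and the integral can diverge. The one further point requiring care, when the theory is complex-valued, is that one integrates over a real slice of the fields on which the real part of $S$ is positive-definite, so that $Z_k$ and the Gaussian measures genuinely make sense at each finite level; this is automatic for the $L^2$-norm actions of the $p$-form theories. Granting these, the identification with the expectation value is immediate from the previous subsection: the expectation value map is the unique quasi-isomorphism sending $1$ to $1$, it is computed by the diagrammatic method (which sends $1$ to $1$ and, since the images of the quantum BV differential are total divergences, annihilates coboundaries by Stokes), and hence equals $I(\OO)$.
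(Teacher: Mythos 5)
Your proposal is correct and follows essentially the same route as the paper: reduce to a monomial, evaluate each finite-dimensional Gaussian as a sum over pairings weighted by $\frac 12 \int_X \langle \OO_i^{(k)}, Q^{-1}\OO_j^{(k)}\rangle$, and pass to the limit term by term (the paper obtains the pairing sum by differentiating a generating function with source terms and completing the square, which is just the standard derivation of the Wick formula you invoke directly). Your tail estimate justifying the convergence of the truncated edge weights is a welcome elaboration of a step the paper merely asserts with ``we see that as $k \to \infty$ this agrees with the weight we expect.''
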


\begin{proof}
We check that for each $k$ the Gaussian integral admits a diagrammatic description, and observe that the expressions computed by these diagrams converge to the expression we want.  We may assume as usual that $\OO$ splits as a product of linear smeared observables $\OO = \OO_1^{n_1} \OO_2^{n_2} \cdots \OO_\ell^{n_\ell}$.  The $\OO_i$ describe linear operators on the filtered pieces.  We can write the Gaussian integral using a generating function as
\[\int_{F^k\mr H^0(\Phi(X))} \OO(a) e^{-S(a)} da = \left. \frac{\dd^{n_1+ \cdots + n_\ell}}{\dd t_1^{n_1} \cdots \dd t_\ell^{n_\ell}} \right|_{t_1 = \cdots = t_\ell = 0} \int_{F^k\mr H^0(\Phi(X))} e^{-\int_X\langle a, Qa \rangle + t_1\OO_1(a)  \cdots + t_\ell \OO_\ell(a)} da,\]
provided that $k$ is large enough that upon projecting to $F^k\mr H^0(\Phi(X))$ the $\OO_i$ are linearly independent.  Call this projection $\OO_i^{(k)}$.  This expression is further simplified by completing the square, yielding
\[Z_k \left. \frac{\dd^{n_1+ \cdots +n_\ell}}{\dd t_1^{n_1} \cdots \dd t_\ell^{n_\ell}} \right|_{t_1 = \cdots = t_\ell = 0} e^{\frac 12 \int_X \langle (t_1\OO_1^{(k)} + \cdots t_\ell\OO_\ell^{(k)}), Q^{-1} (t_1\OO_1^{(k)} + \cdots t_\ell\OO_\ell^{(k)})\rangle }\]
where we've identified the linear smeared observables with differential forms.  We can now compute the Gaussian integral diagrammatically. The $t_1^{n_1}\cdots t_\ell^{n_\ell}$-term of the generating function is the sum over Feynman diagrams as described above, where a diagram is weighted by a product of matrix elements $\frac 12 \int_X \langle \OO_i^{(k)}, Q^{-1}\OO_j^{(k)} \rangle$ corresponding to the edges. We see that as $k \to \infty$ this agrees with the weight we expect.
\end{proof}

Now we can justify why the expectation value vanishes on the image of the quantum differential.  Let $\OO \in \obssm_{\Phi}(X)_0$ be a smeared degree 0 global observable, and suppose $\OO = d_\Phi V + (D - \iota_{\d S})W$ is in the image of the quantum differential.  The exact term $\d_\Phi V$ is zero in $\mr H^0$ of the fields, so it suffices to consider the $W$ piece.  The restriction of $W$ to a filtered piece is a vector field on the vector space $F^k\mr H^0(\Phi(X))$, and we can compute the divergence
\[\text{div}(e^{-S(a)} W) = \OO e^{-S(a)},\]
where the restriction to the filtered piece is suppressed in the notation.  So the expectation value of $\OO$ is a limit of integrals of divergences, which vanish by Stokes' theorem, and the expectation value is zero.  This implies that the procedure described above does indeed compute the cohomology class of a global smeared observable in the canonically trivialized cohomology.

\section{Fourier Duality for Polynomial Observables} \label{sectionFourier}
In its simplest form, Fourier duality is an isomorphism on degree 0 observables in the free $p$-form theories: $\obsq_{\Omega^p,R}(U)_0 \iso \obsq_{\Omega^{n-p},1/2R}(U)_0$.  It will not extend to any kind of cochain maps in these theories, and in particular will not be compatible with the expectation value maps, but we'll show that it \emph{is} compatible with the expectation values after the restriction $\obsq_{\Omega^p,R} \to \obsq_{\Omega^p_{cl},R}$ to the closed $p$-form theories.

\begin{remark}
The reason we refer to the duality as defined here as \emph{Fourier} duality is that, as we'll show in Proposition \ref{fourier_gaussian}, the dual observable to an observable $\OO$ can be computed as the (appropriately regularized) Fourier transform of $\OO$ in the vector space of closed $p$-forms.  While we'll begin with a more axiomatic definition, this Fourier transform should be thought of as motivating the combinatorial definition.
\end{remark}

\subsection{Feynman Diagrams for Fourier Duality}
We'll construct the Fourier transform in an explicit combinatorial way using Feynman diagrams extending the Feynman diagram expression computing expectation values.  Take a smeared monomial observable $\OO \in \obssm_{\Omega^p,R}(U)_0$.  As above, we write $\OO$ as
\[\OO_1^{n_1}\OO_2^{n_2} \cdots \OO_k^{n_k}\]
where $\OO_1, \ldots \OO_k$ are linearly independent linear smeared observables in $\Omega^p_{c}(U)$.

We compute the Fourier dual of $\OO$ in a similar diagrammatic way to the method we used to compute expectation values.  Depict $\OO$ as a graph with $k$ vertices, and with $n_i$ half edges attached to vertex $i$. Now, we can attach any number of \emph{propagator edges} as before, and also any number of \emph{source terms} -- which attach to an initial half-edge and leave a half-edge free -- in such a way as to leave none of the original half-edges unused.  The source terms have the effect of replacing a linear term $\OO_i$ with its Hodge dual $\ast \OO_i$, i.e. the observable obtained by precomposition with the Hodge star operator.  The result is a new observable
\[(\ast \OO_1)^{m_1} (\ast\OO_2)^{m_2} \cdots (\ast\OO_k)^{m_k}\]
where $m_i$ is the number of source edges connected to vertex $i$, now thought of as a degree zero observable in $\obssm_{\Omega^{n-p},1/2R}(U)_0$.  
\begin{definition}
The total Fourier dual observable $\wt \OO$ is the sum of these observables over all such graphs, where each observable is weighted by the product over all edges of the corresponding graph of the following weights.
\vspace{-9pt}
\begin{itemize}
 \item A propagator between linear observables $\OO_i$ and $\OO_j$ receives weight 
 \[\frac 1{2R^2} \langle \OO_i, \OO_j \rangle = \frac 1{2R^2} \int_{X} \OO_i \wedge \ast \OO_j.\] 
 \item A source term attached to a linear observable $\OO$ receives weight $i / 2R^2$.
\end{itemize}
\end{definition}

\begin{figure}[ht]
 \centering
 \includegraphics[height=3.5cm]{./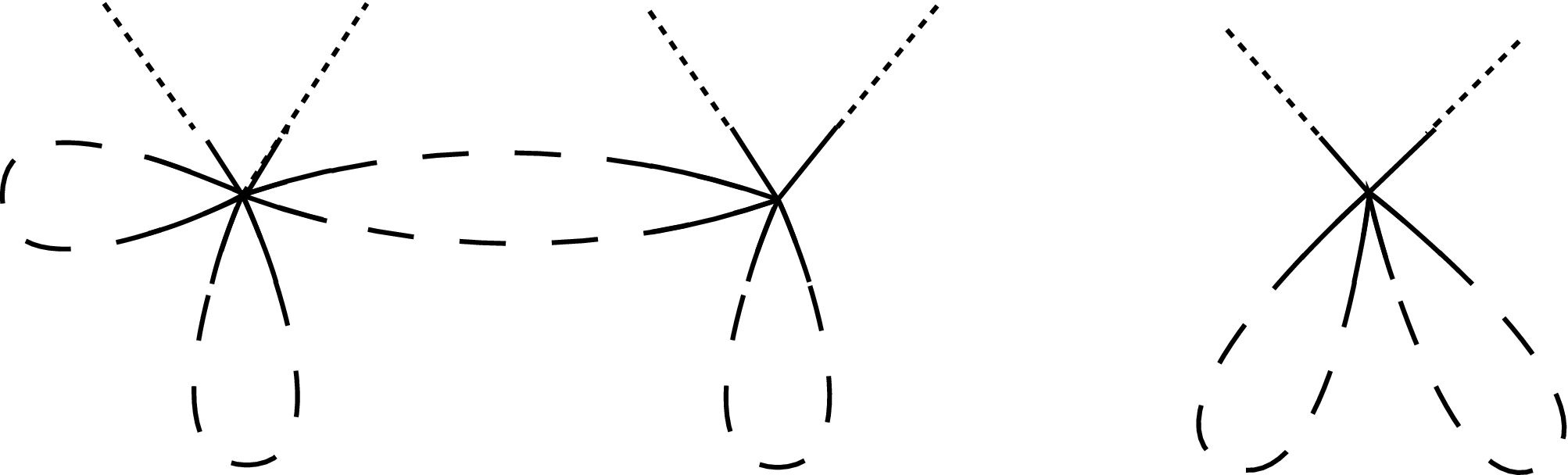}
 \caption{The Feynman diagram corresponding to a degree 6 term in the Fourier dual of an observable of form $\OO_1^8 \,\OO_2^6 \,\OO_3^6$.  Propagators are indicated by dashed lines and sources by dotted lines.}
\end{figure}

From a path integral perspective, these terms have natural interpretations.  The Fourier transform of $\OO$ can be thought of as the expectation value of an observable of form $\OO e^{i \langle a, \wt a \rangle}$ where $a$ is a field and $\wt a$ is its Fourier dual variable.  Alternatively, this can be thought of as a functional derivative of an exponential of form $e^{- S(a) + i \langle a, \wt a \rangle}$.  The propagator terms arise from applying a functional derivative to the action term, while the source terms arise from applying it to the second term, implementing the Fourier dual.  The Hodge star in the source term arises from the specific pairing in the $p$-form theory, namely the $L^2$ pairing $\int a \wedge \ast \wt a$.

\begin{example}
To demonstrate the idea, we compute the Fourier dual observable to $\OO^4$ for $\OO$ a linear smeared observable.  There is one term with no propagator edges and four sources, six with one propagator edge and two sources, and three with two propagator edges and no sources.  The dual is therefore
\[\wt{\OO^4} = \frac 1{16R^8} (\ast \OO)^4 - \frac 6{8R^6}\|\OO\|^2 (\ast \OO)^2 + \frac 3{4R^4} \|\OO\|^4.\]
If $R^2 = 1/2$ and $\|\OO\|=1$ this recovers the fourth Hermite polynomial $\mathrm{He}_4(\ast \OO)$.
\end{example}

We can compute the dual of a general global observable by smearing first, then dualising: the result is that an observable has a uniquely determined smeared dual for each choice of smearing.  In order to compare expectation values of an observable and its dual, the crucial tool that we'll use is Plancherel's formula, which we can rederive in terms of Feynman diagrams.  The first step is to prove a Fourier inversion formula in this language.  In doing so we'll need to remember that after dualising once, the new observable lives in the \emph{dual} theory, with a different action: therefore the weights assigned to edges will be different, corresponding to a different value of the parameter $R$.

We'll also use the convention that the second application of the Fourier transform is the inverse Fourier transform, which assigns weight $-i / 2R^2$ to a source edge, but is otherwise identical.

\begin{prop} \label{doubledual}
A smeared observable $\OO$ is equal to its Fourier double dual $\wt{\wt{\OO}}$.
\end{prop}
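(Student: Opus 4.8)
\emph{Approach.} The conceptual content is that $\wt\OO$ is the Feynman-diagram expansion of the Gaussian Fourier transform $\int\OO(a)\,e^{-S_R(a)+i\langle a,b\rangle}\,da$, and applying the dual again amounts to integrating against $e^{-S_{1/2R}(b)+i\langle b,a'\rangle}\,db$; Fourier inversion (equivalently, Plancherel together with the fact that $S_{1/2R}$ is the Fourier-dual quadratic form of $S_R$) returns $\OO(a')$. The plan is to make this rigorous purely combinatorially, where at each regularised truncation only finitely many diagrams contribute, so there are no convergence subtleties (cf. Proposition~\ref{expvals_gaussian}).

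\emph{Reduction and setup.} By linearity it suffices to treat a monomial $\OO=\OO_1^{n_1}\cdots\OO_k^{n_k}$ in linearly independent linear smeared observables, and after projecting to a large enough eigenspace truncation we may assume the $\OO_i$ stay independent. Unwinding the definition, $\wt{\wt{\OO}}$ is a sum over ``two-stage'' diagrams: stage one attaches propagators (weight $\tfrac1{2R^2}\langle\OO_i,\OO_j\rangle$) and sources (weight $i/2R^2$, each replacing $\OO_i$ by $\ast\OO_i$) using up every original half-edge; the resulting monomial in the $\ast\OO_i$ lives in the dual $(n-p)$-form theory with coupling $1/2R$, and stage two attaches propagators (weight $2R^2\langle\ast\OO_i,\ast\OO_j\rangle$) and inverse sources (weight $-2iR^2$). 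The structural point is that each original half-edge ends up in exactly one of three states: sourced in both stages, hence surviving as a half-edge of the final monomial; consumed by a stage-one propagator; or sourced once and then consumed by a stage-two propagator.

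\emph{The two steps.} First I would isolate the survivor term, where every half-edge is doubly sourced. Each such half-edge contributes the scalar $\tfrac i{2R^2}\cdot(-2iR^2)=1$ -- the inversion $R\mapsto 1/2R$ cancelling exactly and $i\cdot(-i)=1$ -- and sends $\OO_i\mapsto\ast\ast\OO_i$, so with the orientation and Hodge conventions of the paper this term is precisely $\OO$. Then I would show every other contribution cancels: group the remaining diagrams by their set of surviving half-edges, equivalently by the final monomial; with that data fixed, the ``defect'' half-edges are perfectly matched by stage-one and stage-two propagators. Fixing such a matching $M$, each pair of $M$ joining a half-edge at vertex $i$ to one at vertex $j$ may independently be realised as a stage-one propagator, of weight $\tfrac1{2R^2}\langle\OO_i,\OO_j\rangle$, or as a stage-two propagator preceded by two stage-one sources, of weight $(i/2R^2)^2\cdot 2R^2\langle\ast\OO_i,\ast\OO_j\rangle=-\tfrac1{2R^2}\langle\OO_i,\OO_j\rangle$ (using that the Hodge star is an isometry, so $\langle\ast\OO_i,\ast\OO_j\rangle=\langle\OO_i,\OO_j\rangle$). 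These two realisations are exact negatives, so the sum over all diagrams with defect-matching $M$ factors as a product over pairs of $\bigl(\tfrac1{2R^2}\langle\OO_i,\OO_j\rangle-\tfrac1{2R^2}\langle\OO_i,\OO_j\rangle\bigr)=0$, which vanishes whenever $M$ is non-empty. Hence only the survivor term remains and $\wt{\wt{\OO}}=\OO$.

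\emph{Main obstacle.} The delicate part is the weight bookkeeping in the cancellation: one must carry the $L^2$-pairing faithfully through the Hodge star and through the inversion of the coupling constant, remembering that after the first transform the observable has moved to the $(n-p)$-form theory with constant $1/2R$, in order to see that a stage-one propagator and a stage-two propagator on the same pair of half-edges genuinely carry opposite weights -- the sign being exactly the $i^2=-1$ from the two intervening sources. A secondary point is compatibility with the regularisation, which is immediate since for a fixed monomial only finitely many two-stage diagrams contribute and the truncated sums stabilise as in the proof of Proposition~\ref{expvals_gaussian}.
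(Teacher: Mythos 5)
Your proof is correct and is essentially the paper's argument: the double dual is expanded as a sum over two-stage diagrams, the all-sources term survives with weight $\tfrac{i}{2R^2}\cdot(-2iR^2)=1$ per half-edge, and every other diagram cancels because a stage-one propagator and a stage-two propagator (preceded by two stage-one sources) on the same pair of half-edges carry weights $\tfrac{1}{2R^2}\langle\OO_i,\OO_j\rangle$ and $-\tfrac{1}{2R^2}\langle\OO_i,\OO_j\rangle$. Your packaging of the cancellation -- grouping diagrams by the defect matching $M$ and factoring the sum over the two colourings of each pair -- is a slightly tidier version of the paper's sign-reversing colour-swap on a chosen propagator edge, but it is the same idea.
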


\begin{proof}
Let $\OO = \OO_1^{n_1}\OO_2^{n_2} \cdots \OO_k^{n_k}$ as above.  The Fourier double dual of $\OO$ is computed as a sum over diagrams with two kinds of edges: those coming from the first dual and those coming from the second.  We'll show that these diagrams all naturally cancel in pairs apart from the diagram with no propagator edges.  We depict such diagrams with solid edges coming from the first dual, and dotted edges coming from the second dual.
\begin{figure}[ht]
 \centering
 \includegraphics[height=3.5cm]{./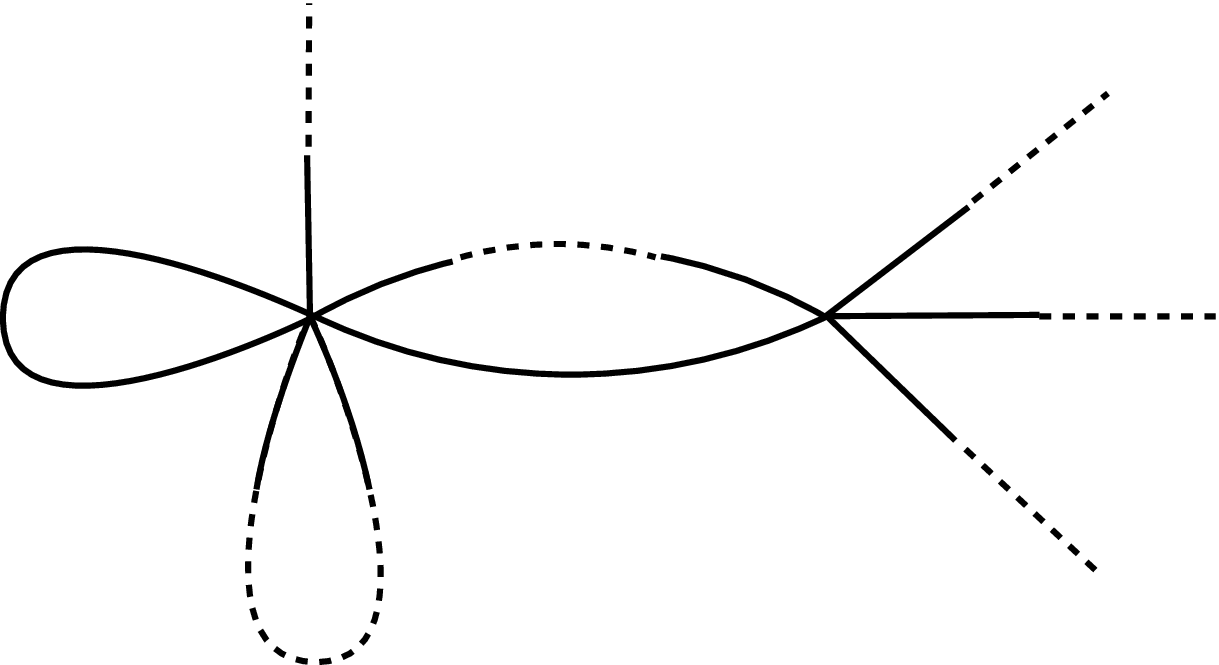}
 \caption{A diagram depicting a summand of the Fourier double dual of an observable of form $\OO_1^7 \, \OO_2^5$.}
\end{figure}

So choose any diagram $D$ with at least one propagator, and choose a propagator edge in the diagram.  We produce a new diagram $D'$ by changing this propagator edge from solid to dotted or from dotted to solid.
\begin{figure}[ht]
 \centering
 \includegraphics[height=3.5cm]{./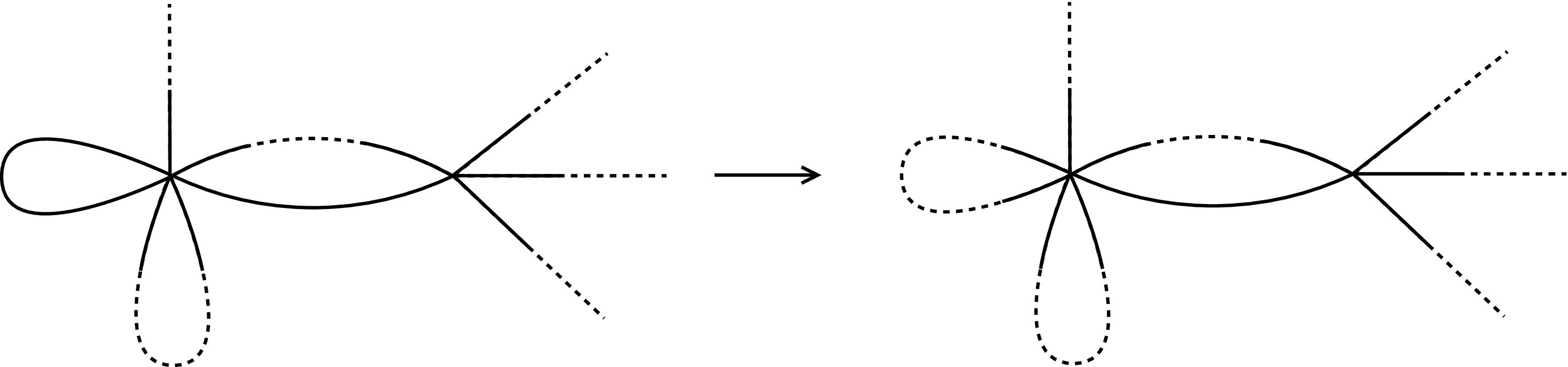}
 \caption{In this diagram we chose the solid leftmost propagator loop (coming from the first dual), and replaced it by two source terms (solid lines) connected with a dotted propagator loop coming from the second dual.}
\end{figure}
 
It suffices to show that the weight attached to this new diagram is $-1$ times the weight attached to the original diagram, so that the two cancel.  This is easy to see: the propagator from the first term contributes a weight $\frac 1{2R^2} \int_{X} \OO_i \wedge \ast \OO_j$.  In the second dual, the weights come from the source terms in the original theory, but the propagator in the \emph{dual} theory, which contributes a weight using the dual theory.  So the total weight is
\[2R^2 \left(\frac i{2R^2}\right)^2  \int_{X} \OO_i \wedge \ast \OO_j\]
Which is $-1$ times the weight of the other diagram, as required.

Finally, we note that the weight assigned to the diagram with no propagator edges in the Fourier double dual is 1.  Indeed, at each free edge, we have a composite of two source terms, contributing a factor of $\left(\frac i {2R^2} \right) \left( \frac{-i}{2(1/2R)^2}\right) = 1$.
\end{proof}

For further justification for these choices of weights, we should compare this combinatorial Fourier duality to one calculated using functional integrals (for global smeared observables).  We'll perform such a check by defining a sequence of Gaussian integrals on filtered pieces, and checking that they converge to a Fourier dual that agrees with the one combinatorially described above.  As before, $F^k\Omega^p(X)$ refers to the filtration by eigenspaces of the Laplacian, this time on the space of all $p$-forms, not just closed $p$-forms.
\begin{prop} \label{fourier_gaussian}
Let $\OO$ be a smeared global observable.  The finite-dimensional Gaussian integrals
\[ \wt \OO_k(\wt a) = \left(\frac 1 {Z_k}\int_{F^k\Omega^p(X)} \OO(a) e^{-S_R(a) + i \int_{X} \wt a \wedge a } da\right) e^{S_{1/2R}(\wt a)}\]
where $\wt a$ is an $(n-p)$-form, converge as $k \to \infty$ to a smeared global observable $\wt O(\wt a) = \lim_{k \to \infty} \wt \OO_k(\wt a)$ which agrees for each $\wt a$ with the Fourier dual observable computed by the Feynman diagrammatic method.
\end{prop}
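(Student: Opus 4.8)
The plan is to run the argument of Proposition~\ref{expvals_gaussian}, treating the extra source term $i\int_X\wt a\wedge a$ as an additional linear contribution when we complete the square. By linearity of both constructions (the diagrammatic dual is defined on monomials and extended linearly, and the integrand is linear in $\OO$) it suffices to treat a monomial $\OO=\OO_1^{n_1}\cdots\OO_k^{n_k}$ with the $\OO_i$ linearly independent linear smeared observables, i.e.\ linearly independent compactly supported $p$-forms. First I would fix $m$ large enough that the projections $\OO_i^{(m)}$ of the $\OO_i$ to $F^m\Omega^p(X)$ remain linearly independent, introduce parameters $t_1,\dots,t_k$, and write the $m$-th Gaussian integral as $\partial_{t_1}^{n_1}\cdots\partial_{t_k}^{n_k}$ evaluated at $t=0$ of
\[\frac1{Z_m}\int_{F^m\Omega^p(X)}e^{-S_R(a)+i\int_X\wt a\wedge a+\sum_i t_i\OO_i(a)}\,da.\]

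Next I would complete the square in $a$. Writing $S_R(a)=R^2\langle a,a\rangle$ for the $L^2$ pairing $\langle-,-\rangle$ on $p$-forms (so the classical BV operator $Q$ is the scalar $R^2$ and $Q^{-1}=R^{-2}$), the linear-in-$a$ part of the exponent is $\langle\, i\hat{\wt a}+\sum_i t_i\OO_i,\ a\,\rangle$, where $\hat{\wt a}=\ast\wt a$ is the $p$-form representing the functional $a\mapsto\int_X\wt a\wedge a$; since $\ast$ is an isometry we have $\frac1{4R^2}\|\hat{\wt a}\|^2=S_{1/2R}(\wt a)$. Performing the finite-dimensional Gaussian integral then produces $Z_m$ times $\exp\!\big(\frac1{4R^2}\langle\, i\hat{\wt a}^{(m)}+\sum t_i\OO_i^{(m)},\ i\hat{\wt a}^{(m)}+\sum t_i\OO_i^{(m)}\,\rangle\big)$. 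Expanding this quadratic form gives a pure-$\hat{\wt a}$ term $-\frac1{4R^2}\|\hat{\wt a}^{(m)}\|^2$, a mixed term $\frac{i}{2R^2}\sum_i t_i\langle\hat{\wt a}^{(m)},\OO_i^{(m)}\rangle$, and a pure-$t$ term $\frac1{4R^2}\sum_{i,j}t_it_j\langle\OO_i^{(m)},\OO_j^{(m)}\rangle$.

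Multiplying by the external factor $e^{S_{1/2R}(\wt a)}$ and letting $m\to\infty$, the factor $e^{-\frac1{4R^2}\|\hat{\wt a}^{(m)}\|^2}$ tends to $e^{-S_{1/2R}(\wt a)}$ and cancels it --- this is precisely why that factor is built into the statement --- while $\langle\OO_i^{(m)},\OO_j^{(m)}\rangle\to\int_X\OO_i\wedge\ast\OO_j$ and $\langle\hat{\wt a}^{(m)},\OO_i^{(m)}\rangle\to(-1)^{p(n-p)}(\ast\OO_i)(\wt a)$, viewing $\ast\OO_i$ as a linear observable in the $(n-p)$-form theory. Applying $\partial_{t_1}^{n_1}\cdots\partial_{t_k}^{n_k}|_{t=0}$ to $\exp$ of a sum of a linear and a quadratic form in $t$ yields, by the Leibniz rule (Wick's theorem), a sum over partitions of the $n_1+\cdots+n_k$ half-edges into singletons --- each a \emph{source} replacing $\OO_i$ by $\ast\OO_i$ with weight $i/2R^2$ --- and pairs --- each a \emph{propagator} of weight $\frac1{2R^2}\int_X\OO_i\wedge\ast\OO_j$. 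This is exactly the combinatorial Fourier dual of Section~\ref{sectionFourier}; in particular the limit exists, is a polynomial smeared observable in the $(n-p)$-form theory, and equals $\wt\OO$.

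The analytic input --- convergence of the finite-dimensional integrals and of their diagram expansions as $m\to\infty$ --- is the same as in the proof of Proposition~\ref{expvals_gaussian}: the $L^2$ form $R^2\langle-,-\rangle$ is positive definite on every $F^m\Omega^p(X)$, so the Gaussians are nondegenerate, and the matrix elements stabilise. I expect the main thing to get right to be the bookkeeping for the source term rather than any analytic point: keeping straight the Hodge-star identifications so that a source genuinely produces $\ast\OO_i$ with weight $i/2R^2$ (up to the fixed sign $(-1)^{p(n-p)}$, which we absorb into the convention for Hodge-dualising an observable), and observing that the $\wt a$-quadratic piece thrown off by completing the square is exactly $-S_{1/2R}(\wt a)$ --- which is what the external factor $e^{S_{1/2R}(\wt a)}$ is there to cancel.
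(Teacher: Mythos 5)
Your proposal is correct and follows essentially the same route as the paper's proof: rewrite the regularised Gaussian integral as a $t$-derivative of a generating function, complete the square to split off the $e^{-S_{1/2R}(\wt a)}$ factor (cancelled by the external $e^{S_{1/2R}(\wt a)}$), and identify the extracted $t_1^{n_1}\cdots t_k^{n_k}$ coefficient in the limit with the diagrammatic sum of sources of weight $i/2R^2$ and propagators of weight $\tfrac{1}{2R^2}\int_X \OO_i\wedge\ast\OO_j$. You simply make explicit a few points the paper leaves implicit (the cancellation of the quadratic $\wt a$-term and the Hodge-star sign conventions), which is fine.
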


\begin{proof}
We use the same method of proof as for \ref{expvals_gaussian}, writing the integral as a derivative of a generating function.  Specifically, for $\OO = \OO_1^{n_1} \OO_2^{n_2} \cdots \OO_\ell^{n_\ell}$ we expand
\begin{align*}
 \frac 1 {Z_k}\int_{F^k\Omega^p(X)} \OO(a) e^{-S_R(a) + i \int_{X} \wt a \wedge a} da &= \left. \frac{\dd^{n_1+ \cdots n_\ell}}{\dd t_1^{n_1} \cdots \dd t_\ell^{n_\ell}} \right|_{t_1 = \cdots = t_\ell= 0} \frac 1{Z_k} \int_{F^k\Omega^(X)} e^{S_R(a) + \sum t_i\int_X \OO_i \wedge \ast a+ i \int_{X} \wt a \wedge a} da \\
 &= \left. \frac{\dd^{n_1+ \cdots n_\ell}}{\dd t_1^{n_1} \cdots \dd t_\ell^{n_\ell}} \right|_{t_1 = \cdots = t_\ell = 0} \\
 &\qquad   e^{-S_{1/2R}(\wt a)} e^{\frac 1{4R^2} \int_X(t_1\OO_1^{(k)} + \cdots + t_\ell \OO_\ell^{(k)}) \wedge \ast (t_1\OO_1^{(k)} + \cdots + t_\ell \OO_\ell^{(k)}) + \frac i{2R^2} \int_X (t_1\OO_1^{(k)} + \cdots + t_\ell \OO_\ell^{(k)}) \wedge \wt a} 
\end{align*}
(by completing the square) and extract the $t_1^{n_1} \cdots t_\ell^{n_\ell}$-term.  Once again we're denoting by $\OO_i^{(k)}$ the projection of $\OO_i$ onto the $k^\text{th}$ filtered piece $F^k\Omega^p(X)$.  We choose the level in the filtration large enough so that the upon projecting to the filtered piece the forms $\OO_i$ are linearly independent.  One then observes that in the limit as $k \to \infty$ the relevant term is given by a sum over diagrams as described with the correct weights.
\end{proof}

Now, for any open set $U \sub X$ we have a restriction map of degree zero local observables $r(U) \colon \obssm_{\Omega^p}(U)_0 \to \obssm_{\Omega^p_{cl}}(U)_0$  induced by the projection $\Omega^p_c(U) \to \Omega^p_c(U)/d^*\Omega^{p+1}_c(U)$.  This gives us a candidate notion of duality in the closed $p$-form theory.  So, we might take a degree 0 observable in the image of $r(U)$, choose a preimage, compute the dual then restrict once more.  Of course, this is not quite canonical, because the map $r(U)$ is not injective: the resulting dual observable might depend on the choice of preimage we made.  However, in certain circumstances we might be able to choose a consistent scheme for choosing such a preimage, therefore a canonical duality map.  We'll give such an example in Section \ref{sectionWilsontHooft}, but first we'll prove that for \emph{any} choice of lift, the resulting dual observable in the $\Omega^p_{cl}$ theory has the same expectation value as the original theory.

\begin{remark}
We can also consider duality for closed $p$-form theories with coefficients in a vector space $V$, and -- as we'll observe shortly -- generalized Maxwell theories with gauge group a higher rank torus $T = V / L$, as mentioned in Section \ref{sectionGenMaxwell}.  The theory generalizes in a natural way, with a $p$-form theory with gauge group $T$ dual to an $(n-p)$-form theory with gauge group $\hat T$, the dual torus.  Indeed, there is an identical relationship between the generalized Maxwell theory with gauge group a higher rank torus and a closed $p$-form theory where the forms have coefficients in a vector bundle, and where the classical BV operator is given by the matrix describing the lattice $L$.  By diagonalising this matrix the system separates into a sum of rank one theories, with monomial smeared observables likewise splitting into products of monomial observables in rank one theories which one can dualize individually. 
\end{remark} 

\subsection{Fourier Duality and Expectation Values} \label{Fourierexpvals}
At this point we have two equivalent ways of thinking about both the Fourier transform and the expectation value map for smeared observables: by Feynman diagrams (which allowed us to describe the dual locally) and by functional integration (which allow us to perform calculations, but only globally).  We'll compare the expectation values of dual observables using a functional integral calculation, in which the restriction to \emph{closed} $p$-forms will be crucial.

For an actual equality of expectation values as described above we'll have to restrict to observables on a contractible open set $U$.  Recall this is the setting where the local observables in the closed $p$-form theory agree with observables in the original generalized Maxwell theory.  On more general open sets connections on higher circle bundles are related to only those closed $p$-forms with \emph{integral periods}.  Let's recall this idea, which we described in the context of Deligne cohomology in Remark \ref{Deligne_hypercohomology}.

Recall from Definition \ref{integral_periods_def} that $\Omega^p_{\cl, \ZZ}(X) \sub \Omega^p(X)$ denote the space of closed $p$-forms on a manifold $X$ with \emph{integral periods}, i.e. $p$-forms $\alpha$ where the cohomology class $[\alpha]$ lies in the lattice $\mr H^p(X; 2\pi R \ZZ) \sub \mr H^p_{\mr{dR}}(X)$. We define a filtration $F_k\Omega^p_{\cl, \ZZ}(X)$ on the space of closed $p$-forms with integral periods by the intersection $F_k\Omega^p(X) \cap \Omega^p_{\cl, \ZZ}(X)$.

\begin{remark}
Let $X$ be compact.  Then the space $\Omega^p_{\cl, \ZZ}(X) \sub \Omega^p(X)$ splits as $\d\Omega^{p-1}(X) \times \mr H^p(X; 2\pi R \ZZ)$, where the lattice factor is contained in the space of harmonic $p$-forms.  In particular, when we filter, the lattice factor is contained in the intersection of all the filtered pieces -- it does not vary with $k$.
\end{remark}

Recall from Lemma \ref{closed_Maxwell_relation_lemma} that there is a map on degree zero observables $F^* \colon \obssm_{\Omega^p_{cl},R}(U)_0 \to \obsq_{\pmp,R}(U)_0$ for \emph{any} $U$, which sends a compactly supported closed $p$-form $a$ to the local observable 
\[A \mapsto \int_U F_A \wedge \ast a.\]
However, this map is generally not an isomorphism.  Nevertheless, from a functional integral point-of-view we can define the expectation value of such an observable in the generalized Maxwell theory, even if $X$ has non-vanishing degree $p$ cohomology.  

\begin{definition} \label{exp_val_closed_def}
Given $\OO \in \obssm_{\Omega^p_{cl},R}(U)_0$ we extend $\OO$ to a global degree zero obervable using the factorization structure, and define its \emph{expectation value} to be
\[\langle \OO \rangle_R = \lim_{k \to \infty} \int_{F^k\Omega^p_{cl,\ZZ}(X)} \OO(a) e^{-S_R(a)} \d a.\]
\end{definition}

\begin{remark}
Note that expectation values are defined by first extending an observable using the factorization structure from the open set $U$ to the (compact oriented) global spacetime $X$.  As such, we only need to use the filtration of differential forms on compact $X$, where the spectrum of the Laplacian is well-behaved.
\end{remark}
 
We notice that if $\mr H^p(U) = 0$ then this definition agrees with the one we used in Section \ref{expvals_gaussian}, so in particular the limit converges.  In general, the integrand is dominated in absolute value by the integrand over all closed $p$-forms, which we already know converges (since the proof of \ref{expvals_gaussian} still applies with $\OO_i$ replaced by $|\OO_i|$).

\begin{remark}
The map $F^* \colon \obssm_{\Omega^p_{cl},R}(U)_0 \to \obsq_{\pmp,R}(U)_0$ is injective, and if $\mr H^p(U; \ZZ) = 0$ it is an isomorphism.  We can use Definition \ref{exp_val_closed_def} to define the expectation value of an observable in the image of $F^*$.  We do not know the right prescription for extending this notion to define the expectation value of observables that depend more generally on the discrete factor of the space of generalized Maxwell fields.
\end{remark}

Using this definition (and bearing in mind its relationship to the notion of expectation value considered above on certain open sets), we'll prove the main compatibility with duality.

\begin{theorem} \label{maintheorem}
Let $\OO$ be a local observable in $\obssm_{\Omega^p,R}(U)_0$, and let $\wt \OO\in \obssm_{\Omega^{n-p},1/2R}(U)_0$ be its Fourier dual observable.  Let $r(\OO)$ and $r(\wt \OO)$ be the restrictions to local observables in $\obssm_{\Omega^p_{cl}}(U)_0$ and $\obssm_{\Omega^{n-p}_{cl}}(U)_0$ respectively. Then, computing the expectation values of $r(\OO)$ and $r(\wt \OO)$, we find
\[\langle r(\OO) \rangle_R = \langle r(\wt \OO) \rangle_{\frac 1{2R}}.\]
\end{theorem}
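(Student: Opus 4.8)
The plan is to compute both expectation values as limits of finite-dimensional integrals and to match them, at each regularised level $k$, via Plancherel's theorem in the ``massive'' directions together with Poisson summation in the ``topological'' directions. By Proposition~\ref{fourier_gaussian} the dual observable is characterised by
\[\wt\OO(\wt a)\,e^{-S_{1/2R}(\wt a)} \;=\; \lim_{k\to\infty}\frac 1{Z_k}\int_{F^k\Omega^p(X)}\OO(a)\,e^{-S_R(a)+i\int_X\wt a\wedge a}\,da,\]
so that $\wt\OO\,e^{-S_{1/2R}}$ is literally the Fourier transform of $\OO\,e^{-S_R}$ with respect to the wedge pairing $\int_X(-)\wedge(-)\colon\Omega^{n-p}\otimes\Omega^p\to\CC$. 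On the other hand $\langle r(\wt\OO)\rangle_{1/2R}$ is, by definition, the $k\to\infty$ limit of the normalised integral of $\wt\OO\,e^{-S_{1/2R}}$ over $F^k\Omega^{n-p}_{cl,\ZZ}(X)$, and similarly $\langle r(\OO)\rangle_R$ is the normalised limit of the integral of $\OO\,e^{-S_R}$ over $F^k\Omega^p_{cl,\ZZ}(X)$. Hence it suffices to prove, up to the normalising constants of Propositions~\ref{expvals_gaussian} and \ref{fourier_gaussian} (which we check separately are independent of $\OO$), the finite-level identity
\[\int_{F^k\Omega^{n-p}_{cl,\ZZ}(X)}\Big(\int_{F^k\Omega^p(X)}\OO(a)\,e^{-S_R(a)+i\int_X\wt a\wedge a}\,da\Big)\,d\wt a \;\propto\; \int_{F^k\Omega^p_{cl,\ZZ}(X)}\OO(a)\,e^{-S_R(a)}\,da .\]

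Fix $k$ and apply the Hodge decomposition to $F^k\Omega^p(X)$ and $F^k\Omega^{n-p}(X)$, writing each as an orthogonal sum of a harmonic piece and finite-dimensional spaces of exact and coexact forms. One checks, by a single integration by parts using $\ast\,d^\ast=\pm\,d\,\ast$, that under the wedge pairing harmonic pairs only with harmonic (giving the Poincar\'e pairing), exact $p$-forms pair only with coexact $(n-p)$-forms, coexact $p$-forms pair only with exact $(n-p)$-forms, and all remaining pairings vanish. Moreover $F^k\Omega^p_{cl,\ZZ}(X)$ is the sum of the level-$k$ exact $p$-forms and the lattice $\Lambda_p\subset\mc H^p$ of harmonic representatives of classes in $H^p(X;2\pi R\ZZ)$, and likewise $F^k\Omega^{n-p}_{cl,\ZZ}(X)$ is the sum of level-$k$ exact $(n-p)$-forms and a lattice $\Lambda_{n-p}\subset\mc H^{n-p}$.

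Now carry out the $\wt a$-integral on the left first. Writing $\wt a=\ell+\eta$ with $\ell\in\Lambda_{n-p}$ and $\eta$ exact, and $a=a_h+a_{\mathrm{ex}}+a_{\mathrm{co}}$, the orthogonality statements collapse $\int_X\wt a\wedge a$ to $\int_X\ell\wedge a_h+\int_X\eta\wedge a_{\mathrm{co}}$, so the $\wt a$-integral factors as
\[\Big(\sum_{\ell\in\Lambda_{n-p}}e^{\,i\int_X\ell\wedge a_h}\Big)\cdot\Big(\int e^{\,i\int_X\eta\wedge a_{\mathrm{co}}}\,d\eta\Big),\]
the second integral running over the finite-dimensional space of level-$k$ exact $(n-p)$-forms. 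That second factor is the Fourier transform of $1$ on a vector space paired nondegenerately with the coexact $p$-forms, hence a constant times the delta function forcing $a_{\mathrm{co}}=0$. The first factor is a lattice theta-sum; by Poisson summation it is a constant times a sum of delta functions supported on the dual lattice $\Lambda_{n-p}^\vee\subset\mc H^p$, and the whole point of the relation $R\leftrightarrow\tfrac1{2R}$ is that $\Lambda_{n-p}^\vee=\Lambda_p$ -- this is Poincar\'e duality over $\ZZ$, i.e.\ unimodularity of the cup-product pairing on free integral cohomology, combined with the chosen normalisations of the coupling-rescaled lattices. Integrating $a$ against these two distributions kills the coexact directions, restricts the harmonic directions to $\Lambda_p$, and leaves the exact directions free -- and the resulting domain, $\Lambda_p$ plus the level-$k$ exact $p$-forms, is exactly $F^k\Omega^p_{cl,\ZZ}(X)$, on which $\OO$ restricts to $r(\OO)$. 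This is the desired proportionality at level $k$; letting $k\to\infty$ along a common cofinal sequence of filtration levels proves the theorem.

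\textbf{The main obstacle} is analytic rather than structural. One must justify interchanging the $k\to\infty$ limit of Proposition~\ref{fourier_gaussian} with the $\wt a$-integration and with the infinite lattice sum; this is done as in the proof of Proposition~\ref{expvals_gaussian}, dominating the integrand in absolute value (replacing each linear factor $\OO_i$ by $|\OO_i|$) by the convergent integral of the plain $p$-form theory and using the Gaussian decay of the theta-sum. One must also verify that the normalising constants ($Z_k$, and the Poisson-summation and Gaussian-Fourier Jacobians) are independent of $\OO$, so that they cancel in the normalised expectation values -- equivalently, that the case $\OO=1$ yields $\langle 1\rangle=1$ on both sides, which follows from the Gaussian Fourier identity $\widehat{e^{-S_R}}\propto e^{-S_{1/2R}}$ already built into Proposition~\ref{fourier_gaussian}.
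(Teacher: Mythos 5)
Your argument is correct and is essentially the paper's own proof: both hinge on writing the expectation value as a limit of regularised double integrals, exchanging the order of integration, and identifying the resulting Fourier transform of the delta function on closed integral $p$-forms with the delta function on closed integral $(n-p)$-forms (after the Hodge star), the only cosmetic difference being that you unfold $\wt\OO$ directly as a Fourier integral of $\OO$ and compute the $\wt a$-integral first, where the paper starts from $\OO = \wt{\wt\OO}$ (Proposition \ref{doubledual}) and computes the $a$-integral against $\delta_{\Omega^p_{cl,\ZZ}(X)}$ first. Your explicit justification of that key step -- the Hodge-decomposition orthogonality of the wedge pairing, Poisson summation on the harmonic lattice, and unimodularity of the integral intersection pairing -- usefully fills in what the paper asserts in one line, and your attention to dominating the integrands to exchange the $k\to\infty$ limit with the integration is a point the paper passes over.
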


\begin{remark}
Using the map discussed above we can just as well consider the observables $r(\OO)$ and $r(\wt \OO)$ as observables in the appropriate generalized Maxwell theories.
\end{remark}

\begin{proof}
 We know by \ref{doubledual} that $\OO = \wt {\wt \OO}$, so in particular $\langle r(\OO) \rangle_R = \langle r(\wt {\wt \OO}) \rangle_R$.  By the calculation in Proposition \ref{fourier_gaussian} we can write this expectation value as the limit as $k \to \infty$ of the Gaussian integrals
 \begin{align*}
  \frac 1{Z_k} \int_{F^k\Omega^p_{cl, \ZZ}(X)} \OO(a) e^{-S_R(a)} da &= \frac 1{Z_k} \int_{F^k\Omega^p_{cl, \ZZ}(X)} \wt{\wt \OO}(a) e^{-S_R(a)} da \\
  &= \frac 1{Z_k}\int_{F^k \Omega^p_{cl,\ZZ}(X)} \int_{F^k\Omega^{n-p}(X)} \wt\OO(\wt a) e^{-S_{1/2R}(\wt a) - i \int_{X} \wt a 
  \wedge a} d{\wt a} \,da \\
  &= \frac 1{Z_k}\int_{F^k \Omega^{p}(X)} \int_{F^k\Omega^{n-p}(X)} \wt \OO(\wt a) e^{-S_{1/2R}(\wt a) - i \int_{X} \wt a \wedge a} \, \delta_{\Omega^p_{cl,\ZZ}(X)}(a)\, d{\wt a} \, da \\
 \end{align*}
 The last line needs a little explanation.  The distribution $\delta_{\Omega^p_{cl,\ZZ}(X)}$ is the delta-function on the closed and integral $p$-forms sitting inside all $p$-forms (restricted to the filtered piece): pairing with this distribution and integrating over all $p$-forms in the filtered piece is the same as integrating only over the relevant subgroup. 
 
 Now, for a fixed value of $k$, we can reinterpret the final integral above by changing the order of integration.  This computes the Fourier dual of the delta function $\delta_{\Omega^p_{cl,\ZZ}(X)}$ and then pushes forward along the Hodge star. The Fourier dual of the delta function is $\delta_{\Omega^p_{\mr{cocl},\ZZ}(X)}$, the delta function on the group of \emph{coclosed} $p$-forms with integral $\d^*$ cohomology class.  That is, the external product $\delta_{\d^*\Omega^{p+1}(X)} \boxtimes \delta_{\mc \mr H^{p}_\ZZ}$ where $\mc \mr H^{p}_\ZZ$ is the lattice in the space of harmonic $p$-forms corresponding to the integral cohomology via Hodge theory.  Pushing this distribution forward along the Hodge star yields the delta function $\delta_{\Omega^{n-p}_{cl,\ZZ}(X)}$ on the closed $(n-p)$-forms with integral periods.  Therefore
\begin{align*}
 \langle r(\OO) \rangle_R &= \lim_{k \to \infty} \frac 1{Z_k}\int_{F^k \Omega^{n-p}(X)} \wt \OO(\wt a) e^{-S_{1/2R}(\wt a)} \delta_{\Omega^{n-p}_{cl,\ZZ}(X)} d\wt a \\
 &= \lim_{k \to \infty} \frac 1{Z_k}\int_{F^k \Omega^{n-p}_{cl, \ZZ}(X)} \wt \OO(\wt a) e^{-S_{1/2R}(\wt a)} d\wt a \\
 &= \langle r(\wt \OO) \rangle_{\frac 1{2R}}
\end{align*}
as required. 
\end{proof}

So to summarize, duality gives the following structure to the factorization algebra of quantum observables in our theories.
\begin{itemize}
 \item For each open set $U$, we have a subalgebra $\obssm_{\Omega^p_{cl},R}(U)_0 \sub \obsq_{R}(U)_0$ of the space of degree 0 local observables.  If $U$ is contractible (for instance for local observables in a small neighbourhood of a point) this subalgebra is dense.
 \item For a local observable $\OO$ living in this subalgebra we can define a \emph{Fourier dual} observable in $\obssm_{\Omega^{n-p}_{cl},1/2R}(U)_0$.  This depends on a choice of extension of $\OO$ to a functional on all $p$-forms, rather than just closed $p$-forms.
 \item For any choice of dual observable, we can compute their expectation values in the original theory and its dual, and they agree.  If $\mr H^p(U) = 0$ then this expectation value map agrees with a natural construction from the point of view of the factorization algebra.
\end{itemize}

We can rephrase the theorem in the language of factorization algebras.  Note that $\obssm_{\Omega^p}(U)_0$ and $\obssm_{\Omega^p_{cl}}(U)_0$ form factorization algebras themselves as $U$ varies, concentrated in degree zero.  The inclusion maps $\obssm_{\Omega^p}(U)_0 \to \obssm_{\Omega^p}(U)$ and $\obssm_{\Omega^p_{cl}}(U)_0 \to \obssm_{\Omega^p_{cl}}(U)$ are cochain maps since the target complexes are concentrated in non-positive degrees, and factorization algebra maps because the factorization algebra structure maps preserve the degree zero piece.  Likewise, the restriction maps  $\obssm_{\Omega^p}(U)_0 \to \obssm_{\Omega^p_{cl}}(U)_0$ clearly commute with the factorization algebra structure maps, so define a factorization algebra map.  

It's also easy to observe that the Fourier duality map $\obssm_{\Omega^p,R}(U)_0 \to \obssm_{\Omega^{n-p},\frac 1{2R}}(U)_0$ defines a factorization algebra map: the degree zero observables are just the free cdga on the local sections of a cosheaf of vector spaces, so the structure maps are just the maps induced on the free algebra from the cosheaf structure maps.  The structure maps are therefore given by the ordinary product
\begin{align*}
\sym(\Omega^p_c(U_1) / \d^*\Omega^{p+1}_c(U_1)) \otimes \sym(\Omega^p_c(U_2) / \d^*\Omega^{p+1}_c(U_2)) &\to \sym(\Omega^p_c(V) / \d^*\Omega^{p+1}_c(V)) \\
(\OO_1 \cdots \OO_n) \otimes (\OO'_1 \cdots \OO'_m) &\mapsto \OO_1 \cdots \OO_n \cdot \OO'_1 \cdots \OO'_m
\end{align*}
where $\OO_i$ and $\OO'_j$ are local linear smeared observables: sections of $\Omega^p_c / \d^*\Omega^{p+1}_c$ with compact support in disjoint open sets $U_1$ and $U_2$ respectively.  The Fourier transform of a product of observables with disjoint support is the product of the Fourier transforms, since if $\OO$ and $\OO'$ have disjoint support then their $L^2$ inner product is zero, so all Feynman diagrams with propagator edges between their vertices contribute zero weight. Therefore duality gives a factorization algebra map on the factorization algebra consisting of degree zero observables only.

Combining all of these statements we have a correspondence of (pre)factorization algebras of the form
\[\xymatrix{
 &&(\obssm_{\Omega^p,R})_0 \ar@{<->}^\sim[r] \ar[dl]_{r_R} &(\obssm_{\Omega^{n-p},\frac 1{2R}})_0 \ar[dr]^{r_{\frac 1{2R}}} \\
 \obsq_{p,R} &\obssm_{\Omega^p_{cl},R} \ar[l]_{F_R} &&& \obssm_{\Omega^{n-p}_{cl},\frac 1{2R}} \ar[r]^{F_{\frac 1{2R}}} &\obsq_{n-p,\frac 1{2R}}
}\]
where the top arrow is the isomorphism given by Fourier duality, and the diagonal arrows are given by restriction, then inclusion of degree zero observables into all observables.  

\begin{definition}
We say a pair of local observables $\OO$, $\OO'$ in $ \obsq_{p,R}(U)$ and $\obsq_{n-p,\frac 1{2R}}(U)$ respectively are \emph{incident} if they are the images under the restriction maps of Fourier dual degree zero observables.
\end{definition}

In this language, Theorem \ref{maintheorem} can be rephrased in the following way.
\begin{corollary}
If $\OO$ and $\OO'$ are incident local observables in dual generalized Maxwell theories then $\langle \OO \rangle_R = \langle \OO' \rangle_{\frac 1{2R}}$.
\end{corollary}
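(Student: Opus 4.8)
The plan is to obtain this as an essentially immediate consequence of Theorem \ref{maintheorem}, once the definition of incidence is unpacked. First I would write out what it means for $\OO$ and $\OO'$ to be incident: there exist a degree zero local observable $\mc P \in \obssm_{\Omega^p,R}(U)_0$ and its Fourier dual $\wt{\mc P} \in \obssm_{\Omega^{n-p},\frac1{2R}}(U)_0$ such that $\OO = r_R(\mc P)$ and $\OO' = r_{\frac1{2R}}(\wt{\mc P})$, where $r_R$ and $r_{\frac1{2R}}$ are the restriction maps to the closed $p$-form and closed $(n-p)$-form theories appearing in the correspondence diagram above. The key observation is that these restriction maps are literally the maps written $r(-)$ in the statement of Theorem \ref{maintheorem}: both are induced by the projection $\Omega^p_c(U) \to \Omega^p_c(U)/d^*\Omega^{p+1}_c(U)$. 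Hence $\OO = r(\mc P)$ and $\OO' = r(\wt{\mc P})$ in the notation of that theorem.

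With this identification in hand I would simply apply Theorem \ref{maintheorem} to the observable $\mc P$ in place of $\OO$: it yields $\langle r(\mc P)\rangle_R = \langle r(\wt{\mc P})\rangle_{\frac1{2R}}$, which is exactly $\langle \OO\rangle_R = \langle \OO'\rangle_{\frac1{2R}}$. Here the expectation values are the ones defined just above the theorem, via the limit as $k\to\infty$ of the normalised Gaussian integrals over $F^k\Omega^p_{cl,\ZZ}(X)$ after extending the local observable to a global one; I would note in passing that both limits converge by the domination estimate recalled there (against the integral over all closed forms, which converges by Proposition \ref{expvals_gaussian}), so that each side denotes a well-defined real number and the asserted equality is meaningful.

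The only point requiring a remark rather than a computation is a matter of bookkeeping, and it is also the only place one could imagine an obstacle. The corollary is phrased for local observables $\OO$, $\OO'$ lying in the full complexes $\obssm_{\Omega^p_{cl},R}(U)$ and $\obssm_{\Omega^{n-p}_{cl},\frac1{2R}}(U)$, whereas Theorem \ref{maintheorem} concerns degree zero observables only. But incidence forces $\OO$ and $\OO'$ into the degree zero subalgebras $\obssm_{\Omega^p_{cl},R}(U)_0$ and $\obssm_{\Omega^{n-p}_{cl},\frac1{2R}}(U)_0$, since by hypothesis they are images of $r_R$ and $r_{\frac1{2R}}$, which land there. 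So no strengthening of the theorem is needed; the substantive content is already contained in Theorem \ref{maintheorem}, and once one confirms that the arrows of the correspondence diagram and the maps $r(-)$ of that theorem coincide, the corollary follows. (Note also that the statement quantifies over a given incident pair, so there is no choice of preimage $\mc P$ to worry about — any witness to incidence serves equally well.)
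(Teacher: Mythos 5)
Your proposal is correct and matches the paper exactly: the paper offers no separate argument, stating only that the corollary is Theorem \ref{maintheorem} ``rephrased'' in the language of incidence, which is precisely the unpacking you carry out. Your additional remarks on convergence and on the observables landing in the degree zero subalgebras are accurate but not needed beyond what the paper already establishes.
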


At the end of the next section we'll observe that it is not possible to improve this statement from a correspondence to a \emph{map} of prefactorization algebras (even of the factorization algebras in the closed $p$-form theories) in any natural way.

\begin{remark}
We should emphasise that, while our results allow us to describe pairs of dual observables, and shows that such pairs have equal expectation value, they do not allow us to actually \emph{calculate} the values of these expectation values.  This is in contrast to much of the existing work on abelian duality, as described in the introduction, which has studied duality for more specific observables in specific dimensions (particularly the trivial observable $1$ whose expectation value is the partition function of the theory), but in a formalism in which the common values of these expectations can be calculated and directly shown to be equal.
\end{remark}

\subsection{Wilson and `t Hooft Operators} \label{sectionWilsontHooft}
In this section we'll give a concrete example of observables that admit canonical duals in generalized Maxwell theories, corresponding to familiar observables in the usual Maxwell theory, i.e. the case $p=2$.  For Wilson and 't Hooft operators in dimensions 3 and 4 specifically, the behaviour under abelian duality is discussed in a paper of Kapustin and Tikhonov \cite{KapustinTikhonov}.

Wilson and 't Hooft operators can be defined classically in Yang-Mills theory with any compact gauge group $G$, just as functionals on the space of fields (from which we will -- for an abelian gauge group -- construct classical and quantum observables).  So let $X$, for the moment, be a Riemannian 4-manifold.
\begin{definition}
Let $\rho$ be an irreducible representation of $G$.  The \emph{Wilson operator} $W_{\gamma,\rho}$ around an oriented loop $\gamma$ in $X$ is the functional on the space of connections on principal $G$-bundles sending a connection $A$ to
\[W_{\gamma,\rho}(A) = \tr(\rho(\hol_\gamma(A))),\]
where $\hol_\gamma(A)$ denotes the holonomy of the connection around the loop $\gamma$.  Equivalently we can compute the Wilson operator as a path-ordered exponential
\[W_{\gamma,\rho}(A) = \tr(\rho(\mc P e^{i \oint_\gamma A})).\]
\end{definition}
Suppose $\gamma$ bounds a disc $D$.  In this case there is a candidate dual observable to the Wilson operator.
\begin{definition}
Let $\mu \colon U(1) \to G$ be a cocharacter for the group $G$.  The \emph{'t Hooft operator} $T_{\gamma, \mu}$ around the loop $\gamma$ in $X$ is the functional on the space of connections on principal $G$-bundles sending a connection $A$ to 
\[T_{\gamma, \mu}(A) = e^{i \int_D \mu^* (\ast F_A)}\]
where $F_A$ is the curvature of $A$, $\ast F_A$ is its Hodge star, and where $\mu^* \colon \Omega^2(X; \gg^*) \to \Omega^2(X)$ is the pullback along the cocharacter.
\end{definition}
The relationship between these two kinds of operator is clearest in the abelian case, so let $G = U(1)$ (or, with minor modifications, any torus).  The irreducible representations of $U(1)$ are given by the $n$-power maps $z \mapsto z^n$ for $n \in \ZZ$, so we can write our Wilson operators as
\[W_{\gamma, n}(A) = e^{i n \oint_\gamma A} = e^{in \int_D F_A}\]
assuming $\partial D = \gamma$ as above.  Cocharacters are also indexed by integers, so similarly we can describe the 't Hooft operators as
\[T_{\gamma, m}(A) = e^{i m \int_D \ast F_A}.\]
As described above we compute the dual of an observable in abelian Yang-Mills theory by taking its Fourier dual as a functional on all 2-forms, then precomposing with the Hodge star.  The Fourier dual of a plane wave is a plane wave, so we should expect Wilson and 't Hooft observables to be dual to one another.  In the rest of this section we'll prove this, and generalize it to higher degree theories.

Consider the degree $p$ generalized Maxwell theory with coupling constant $R$ on an $n$-manifold $X$.  We'll first describe degree zero gauge invariant observables associated to a complex number $r$ and a singular chain $C \in C_{p}(U)$, for $U \sub X$ an open set.   Recall that the local degree zero observables are given by $\OO(\mr H^0(\Phi(U))) \sub \OO(T^*[-1]\Phi(U))$, and that in this case the degree zero cohomology $\mr H^0(\Phi)$ is given by the group $\Omega^{p}_{cl, \ZZ}(U; \CC)$ of closed $p$-forms with integral periods (periods in the lattice $\mr H^p(U; 2\pi R\ZZ)$).  So, analogously to the above we define a \emph{Wilson}-type operator by
\[W_{C,r}(\alpha) = e^{i r \int_C \alpha}.\]
Similarly, if $C$ is instead a chain in $C_{n-p}(U)$, we can define an \emph{'t Hooft}-type operator by first applying the Hodge star:
\[T_{C,r}(\alpha) = e^{i r \int_C \ast \alpha}.\]

\begin{remark}
These operators don't quite arise from our definitions: they aren't polynomial functions in linear observables.  However, they can be arbitrarily well approximated by polynomials by taking a finite number of terms in the Taylor series.  We should either note that our constructions, in which the observables are described by a symmetric algebra, extend to completed symmetric algebras, or equivalently just interpret claims about duality for these observables as claims about these polynomial approximations at every degree.
\end{remark}

\begin{remark}
These operators are particularly interesting in the case where they don't quite bound a disk: when they represent \emph{torsion} classes of the homology of $X$.  Such classes are crucial in the analysis of Freed, Moore and Segal \cite{FMS1,FMS2} where non-trivial commutation relations are established between corresponding flux states in the Hilbert space.  More recently Becker, Benini, Schenkel and Szabo \cite{BBSS} analysed this non-commutativity, and abelian duality more generally, in the Lorentzian setting using the language of AQFT.  It would be interesting to understand how abelian duality as described in this article applies to these torsion operators, but the results of this work, to our knowledge, will not contribute a novel perspective to their most salient property: their commutation relations.
\end{remark}

Now, let's investigate duality for these observables.  Firstly, suppose $U$ is an open set with $\mr H^p(U) = 0$, so that the condition of having integral periods is trivial.  Then the observables defined above immediately lift to observables in the closed $p$-form theory, and admit canonical extensions to observables in the theory where fields are \emph{all} $p$-forms (given by precisely the same formula).  We can also investigate approximations for these observables by \emph{smeared} observables.  Integration over a $p$-chain $C$ can be written as the $L^2$-pairing with a particular current: the delta function $\delta_C$.  This current can, in turn, be approximated in $L^2$ by $p$-forms supported on small neighbourhoods of $C$.  So let's investigate the Fourier dual of the smeared observable 
\[\OO_\beta(a) = e^{i r \int_X a \wedge \ast \beta}\]
where $\beta$ is a $p$-form.  

We'll compute this dual using the functional integral at a regularized level:
\begin{align*}
\wt {\OO_{\beta,r}} (\wt a) &= \lim_{k \to \infty} \frac 1{Z_k} \left(\int_{F^k\Omega^p(X)} e^{-S_R(a) + i \int_{X} \wt a \wedge a + ir \int_X a \wedge \ast \beta} da\right) e^{S_{1/2R}(\wt a)} \\
&= e^{-\frac{r^2}{4R^2}\|\beta\|^2} e^{\frac{-r}{2R^2} \int_X \wt a \wedge \beta} \\
&= e^{-\frac{r^2}{4R^2}\|\beta\|^2} \cdot \OO_{\ast \beta, ir/2R^2}(\wt a).
\end{align*}
This calculation allows us to produce the dual of the original Wilson operator by dualising increasingly good smooth approximations.  We find
\[\wt {W_{C,r}} = e^{-\frac{r^2}{4R^2}\|C\|^2}T_{C, ir/2R^2}\]
where $\|C\|$ is the $L^2$-norm of the chain $C$: the usual $L^2$ norm with respect to the metric of its image under Poincar\'e duality.

To summarise, duality for (generalized) Wilson and 't Hooft operators tells us the following.
\begin{corollary}
There is an equality of expectation values in generalized Maxwell theories
\[ \langle W_{C,r} \rangle_R = e^{-\frac{r^2}{4R^2}\|C\|^2} \langle T_{C, ir/2R^2} \rangle_{\frac 1{2R}}.\]
\end{corollary}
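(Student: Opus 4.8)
The plan is to deduce the corollary directly from Theorem \ref{maintheorem} together with the explicit computation of the Fourier dual of a smeared Wilson-type observable carried out just above its statement. Concretely, once we know that the Fourier dual of $W_{C,r}$, regarded as an observable in the degree $p$ theory with coupling $R$, is $e^{-\frac{r^2}{4R^2}\|C\|^2}\,T_{C,\,ir/2R^2}$ in the degree $n-p$ theory with coupling $\frac1{2R}$, the corollary is immediate: Theorem \ref{maintheorem} equates $\langle r(W_{C,r})\rangle_R$ with $\langle r(\wt{W_{C,r}})\rangle_{\frac1{2R}}$, and since the expectation value map is linear the scalar $e^{-\frac{r^2}{4R^2}\|C\|^2}$ factors out. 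One then identifies $\langle r(W_{C,r})\rangle_R$ and $\langle r(T_{C,\,ir/2R^2})\rangle_{\frac1{2R}}$ with the corresponding expectation values in the generalised Maxwell theories, using the definition of the latter as limits of Gaussian integrals over $F^k\Omega^p_{cl,\ZZ}(X)$ and its dual introduced in section \ref{Fourierexpvals}.

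The substance of the argument is therefore in justifying the identification $\wt{W_{C,r}} = e^{-\frac{r^2}{4R^2}\|C\|^2}\,T_{C,\,ir/2R^2}$ and in checking that the limiting procedure is legitimate. First I would replace the current dual to the chain $C$ by a net of $p$-forms $\beta$ supported in shrinking neighbourhoods of $C$ with $\beta$ converging to $\delta_C$, so that $\OO_{\beta,r}(a) = e^{ir\int_X a\wedge\ast\beta}$ approximates $W_{C,r}$; following the remark preceding the corollary, every exponential here is interpreted as its truncation to polynomials of bounded degree, so all observables in sight are genuine smeared polynomial observables to which Theorem \ref{maintheorem} and Proposition \ref{fourier_gaussian} apply. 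For such $\beta$ the displayed calculation above gives $\wt{\OO_{\beta,r}} = e^{-\frac{r^2}{4R^2}\|\beta\|^2}\,\OO_{\ast\beta,\,ir/2R^2}$, and as $\beta\to\delta_C$ one has $\OO_{\ast\beta,\,ir/2R^2}\to T_{C,\,ir/2R^2}$ (using that $\ast$ is an $L^2$-isometry and that integration over a cycle $C$ annihilates coexact forms, so that only the relevant component of $\beta$ contributes). Applying Theorem \ref{maintheorem} at each stage yields $\langle r(\OO_{\beta,r})\rangle_R = e^{-\frac{r^2}{4R^2}\|\beta\|^2}\,\langle r(\OO_{\ast\beta,\,ir/2R^2})\rangle_{\frac1{2R}}$, and the corollary is obtained by letting $\beta\to\delta_C$.

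The main obstacle is controlling this last limit on the right-hand side: the naive $L^2$-norm $\|\beta\|$ diverges as $\beta\to\delta_C$, while the Gaussian expectation $\langle r(\OO_{\ast\beta,\,ir/2R^2})\rangle_{\frac1{2R}}$ diverges in a compensating way, so the two limits must be taken together. I would handle this by working at each regularised level $F^k$, where everything is a finite-dimensional Gaussian integral and the displayed identity is exact (it is precisely the square-completion of Proposition \ref{fourier_gaussian}); after restricting to closed forms the $\|\beta\|^2$-contributions reorganise --- because $\int_C$ kills coexact forms on a cycle, and because $S_R$ is invertible only transverse to the massless modes --- into the finite harmonic norm $\|C\|$, namely the $L^2$-norm of the harmonic $(n-p)$-form Poincar\'e dual to $C$. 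Passing first to the limit in $k$, whose convergence is guaranteed by the domination argument used to define $\langle-\rangle_R$ on $\Omega^p_{cl,\ZZ}(X)$, and then in $\beta$, produces the finite answer $e^{-\frac{r^2}{4R^2}\|C\|^2}\langle T_{C,\,ir/2R^2}\rangle_{\frac1{2R}}$, which is the claimed equality.
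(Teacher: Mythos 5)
Your argument follows the paper's own route: the corollary is obtained there exactly as you describe, by combining the displayed computation $\wt{\OO_{\beta,r}} = e^{-\frac{r^2}{4R^2}\|\beta\|^2}\,\OO_{\ast\beta,\,ir/2R^2}$ for smeared approximations $\beta \to \delta_C$ with Theorem \ref{maintheorem} and linearity of the expectation value map. Your additional discussion of the divergence of $\|\beta\|$ as $\beta \to \delta_C$ addresses a point the paper passes over silently (it simply writes $\|C\|$ for the limiting norm without comment), so your treatment is, if anything, more careful on that score.
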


\begin{example}
Kapustin and Tikhonov \cite{KapustinTikhonov} studied duality for Wilson and 't Hooft type operators in 3 and 4 dimensions (in addition to duality for boundary conditions, which we have not addressed in the present work) by considering composition of operators with duality domain walls.  Their analysis includes several instances of the duality described above.  For 4-dimensional abelian gauge theories they discuss the usual duality of Wilson and 't Hooft line operators, and in 3-dimensions they discuss the duality between order and disorder line (and local) operators in 3d abelian gauge theories and 3d circle valued sigma models.

Kapustin and Tikhonov discuss duality for one further type of non-local operator -- a Chern-Simons operator supported on a 3-manifold. Our  Our formalism does not allow us to analyze this operator, since it does not factor through the closed 2-form theory (i.e. it does not depend only on the curvature of the gauge field).
\end{example}

\begin{remark}
We described a canonical dual for Wilson and 't Hooft operators, using a natural choice of lift from operators acting on closed $p$-forms to operators acting on all $p$-forms.  It's natural to ask whether it's possible to do this for all observables, thus promoting abelian duality from a correspondence to a genuine map of prefactorization algebras.  It turns out however that this is impossible.  We'll demonstrate this in a specific example.

Let $X$ be a $2p$-manifold satisfying $\mr H^p(X) = 0$, so $\Omega^p(X)$ splits as $\d\Omega^{p-1}(X) \oplus \d^*\Omega^{p+1}(X)$.  We'll discuss global linear smeared observables in the closed $p$-form theory on $X$.  Such an observable is an element of $\d\Omega^{p-1}(X))^\vee$ given by $L^2$-pairing with an exact $p$-form.  Prescribing an extension of an observable in the closed $p$-form theory to an observable in the \emph{full} $p$-form theory is equivalent to prescribing the action of the lifted observable on coexact $p$-forms, so a choice of such an extension for all linear smeared observables is a map
\[f \colon \d\Omega^{p-1}(X) \to \d^*\Omega^{p+1}(X).\]
Having specified such a map we obtain a canonical dual observable for every linear observable in the closed $p$-form theory.  Applying this duality procedure twice should bring us back to the observable we started with, i.e. $\ast f \ast f = \id$, which means in particular that $f$ must be an isomorphism.

However, we also need compatibility with the canonical duals constructed above for Wilson and 't Hooft operators.  Let $I$ be a linear observable of form ``integrate over a $p$-cycle'', and let $\alpha_i \to I$ be a sequence of smeared observables approximating $I$.  The duals of Wilson observables of form $e^I$ are determined by the duals of linear observables since $e^x$ is Fourier self-dual, and to obtain the required dual for our Wilson operator we need to choose the trivial lift for $I$, i.e. we need $f(\alpha_i) \to 0$.  But then $\ast f \ast f(\alpha_i) \to 0$, so $\alpha_i \to 0$ which is false.  So there is no possible canonical lift $f$ compatible with the natural duals for Wilson and 't Hooft operators, and in particular no way of improving abelian duality to a genuine map rather than just a correspondence.
\end{remark}

\pagestyle{bib}
\bibliographystyle{alpha}
\bibliography{Abelian_duality_2}

\end{document}